\DeclareFontFamily{OT1}{rsfs}{}
\DeclareFontShape{OT1}{rsfs}{n}{it}{<-> rsfs10}{}
\DeclareMathAlphabet{\mathscr}{OT1}{rsfs}{n}{it}
\def\bar{\overline}
\newcommand{\FF}{\mathbb F}
\newcommand{\CC}{\mathbb C}
\newcommand{\NN}{\mathbb N}
\newcommand{\gs}{\geqslant}
\newcommand{\ls}{\leqslant}
\newcommand{\chara}{\operatorname{char}}
\newcommand{\Sym}{\operatorname{Sym}}
\newcommand{\gr}{\operatorname{gr}}
\newcommand{\HH}[3]{\operatorname{H}^{#1}_{#2}(#3)}
\newcommand{\cod}{\operatorname{codim}}
\newcommand{\iso}{\cong}
\newcommand{\mdd}[1]{\; \mathrm{mod} \; ({#1})}
\newcommand{\tr}{\operatorname{Tr}}
\newcommand{\N}{\operatorname{N}}
\def\V{\mathcal{V}}
\def\I{\mcal{I}}
\def\fm{\mathfrak{m}}
\def\mcal{\mathcal}
\newcommand{\kk}{\Bbbk}
\newcommand{\kv}{{\kk[V]}}
\newcommand{\kvv}{{\kk[V^2]}}
\newcommand{\kvg}{{\kk[V]^G}}
\newcommand{\sv}{\mathcal{S}_V}
\newcommand{\svg}{\mathcal{S}_{V,G}}
\newcommand{\SVG}[2]{\mathcal{S}_{#1,#2}}
\newcommand{\svgw}{\mathcal{S}_{V,G_W}}
\newcommand{\svggr}[2]{\svg(>\!(1\otimes #1)(#2))}
\newcommand{\svgleq}[2]{\svg(\ls\!(1\otimes #1)(#2))}
\newcommand{\svgle}[2]{\svg(<\!(1\otimes #1)(#2))}
\newcommand{\rvg}{\mathcal{R}_{V,G}}
\def\GL{\operatorname{GL}}
\newcommand{\Htil}[1]{\operatorname{\widetilde{\operatorname{H}}}_{#1}}
\newtheorem{thm}{Theorem}[section]
\newtheorem*{thm*}{Theorem}
\newtheorem{cor}[thm]{Corollary}
\newtheorem*{cor*}{Corollary}
\newtheorem{prop}[thm]{Proposition}
\newtheorem{lem}[thm]{Lemma}
\theoremstyle{definition}
\newtheorem{conj}[thm]{Conjecture}
\theoremstyle{remark}
\newtheorem{rmk}[thm]{Remark}
\newtheorem{eg}[thm]{Example}
\title[Separating invariants and local cohomology]{Separating invariants and local cohomology}
\author[Emilie Dufresne]{Emilie Dufresne}
\address{Department of Mathematical Sciences, Durham University, Science Laboratories, South Road, Durham DH1 3LE, UK}
\email{e.s.dufresne@durham.ac.uk}
\author[Jack Jeffries]{Jack Jeffries}
\address{Department of Mathematics, University of Utah, 155 South 1400 East, Room 233, Salt Lake City, UT 84112-0090, USA}
\email{jeffries@math.utah.edu}
\date{\today}
\subjclass[2010]{13A50, 13D45, 06A07, 52C35}
\keywords{Invariant theory, separating invariants, local cohomology, arrangements of linear subspaces, simplicial homology, poset topology}
\begin{document}

\begin{abstract}
The study of separating invariants is a recent trend in invariant theory.  For a finite group acting linearly on a vector space, a separating set is a set of invariants whose elements separate the orbits of $G$. In some ways, separating sets often exhibit better behavior than generating sets for the ring of invariants. We investigate the least possible cardinality of a separating set for a given $G$-action. Our main result is a lower bound that generalizes the classical result of Serre that if the ring of invariants is polynomial then the group action must be generated by pseudoreflections. We find these bounds to be sharp in a wide range of examples.
\end{abstract}

\thanks{This material is based upon work supported by the National Science Foundation under Grant No~0932078 000, while the authors were in residence at the Mathematical Science Research Institute in Berkeley, California, during the Spring semester of 2013. The second author was also supported in part by NSF grants DMS~0758474 and DMS~1162585.}
\maketitle

%%%%%%%%%%%%%%%%%%%%%%%%%%%%%%%%%%%%%%%%%%%%%%%%%%%%
% Section: Introduction
%%%%%%%%%%%%%%%%%%%%%%%%%%%%%%%%%%%%%%%%%%%%%%%%%%%%

\section{Introduction}

For an action of an algebraic group on an affine variety, a \emph{separating set} is a collection of invariants which, as functions on $V$, distinguish any two points that can be distinguished by some invariant. While using invariants as a tool to distinguish orbits of a group action on a variety is a classical endeavor, this approach to invariant theory has enjoyed a resurgence of interest in its modern form, initiated by work of Derksen and Kemper \cite{DK, Kem2}. 

Throughout this paper, we focus on the case of a finite group $G$ acting linearly on a $d$-dimensional vector space $V$ over the field $\kk$. This action induces a contragredient action of the group $G$ on the polynomial ring $\kv:=\Sym(V^*)$; if $\kk$ is infinite, $\kv$ can be identified with the ring of regular functions on $V$.  We consider the \emph{ring of invariants} $\kvg:=\{f\in \kv \ |\  \forall g \in G, \,g \cdot f = f \}$. We will assume throughout that $\kk$ is algebraically closed. While the results of our paper have analogous statements over general fields (see Remark~\ref{alg_closed}), the exposition is cleaner with the assumption that $\kk$ is algebraically closed. In this setting, a separating set is a set $E\subset \kvg$ such that if, for $v, w \in V$, the orbits $G \cdot v$ and $G \cdot w$ are distinct, then there is an $h \in E$ with $h(v)\neq h(w)$; that is, a separating set is a set of invariants which separates orbits.

While the ring of invariants (or a generating set) forms a separating set, there often exist smaller and/or otherwise better-behaved separating sets --- especially in the \emph{modular} case, where $|G|$ is not invertible in $\kk$. For example, there always exist separating sets consisting of elements of degree at most $|G|$ (\cite[Corollary~3.9.14]{DK}), and polarizations of separating sets yield separating sets for vector invariants (\cite[Theorem~1.4]{DKW}). The main question we consider in this paper is: What is the least cardinality of a separating set?

Some general bounds are known. It follows from \cite[Proposition 2.3.10]{DK} that the algebra generated by a separating set, i.e., a \emph{separating algebra}, has dimension $d$, thus any separating set has at least $d$ elements. On the other hand, a secant variety argument (see \cite[Proposition~5.1.1]{Duf2}) shows that there always exists a separating set of size $2d+1$.

Since any separating algebra has dimension $d$, the existence of a separating set of size $d$ is equivalent to the existence of a polynomial separating algebra. The question of whether the ring of invariants is polynomial is very classical, and two of the cornerstone results of invariant theory largely answer this question: The Shephard-Todd Theorem  (see \cite[p.~104]{DK}) says that if $|G|$ is invertible in $\kk$ (the \emph{non-modular} case), then $\kvg$ is a polynomial ring if and only if the action of $G$ is generated by \emph{pseudoreflections} --- elements that fix a hyperplane in $V$. In this case, one calls $G$ a \emph{reflection group}. A theorem of Serre (see \cite[Proposition~3.7.8]{DK}) states that, with no hypothesis on $|G|$, if $\kvg$ is a polynomial ring, then $G$ acts as a \emph{rigid reflection group}: every isotropy subgroup is a reflection group. The problem of classifying which actions have polynomial invariant rings in the modular case remains an important open question.

In \cite[Theorem~1.1]{Duf1}, the first author extends Serre's result by showing that if there exists a polynomial separating algebra, then $G$ is a reflection group. As a corollary, in the non-modular case, there exists a polynomial separating algebra if and only if $G$ is a reflection group. The existence of a separating set of size $d$ is thus related to whether $G$ is a reflection group. Further, in \cite[Theorem~1.3]{Duf1}, the first author shows that if there is a graded separating algebra that is a complete intersection, then the action of $G$ is generated by \emph{bireflections} --- elements that fix a codimension two subspace in $V$. Consequently, if there is a separating set consisting of $d+1$ homogeneous invariants (whence the algebra it generates is a graded hypersurface and hence a complete intersection), then the action of $G$ is generated by bireflections.

In the present paper, we apply techniques of local cohomology to strengthen and extend these bounds. After reviewing some preliminary notions in Section~\ref{Prelims}, in Section~\ref{Lower}, we obtain our main result:
\begin{thm*}\label{thm-MainThm}
If there exists a separating set of size $d+r-1$, then every isotropy subgroup $G_U$ is generated by $r$-reflections. In particular, $G$ is generated by $r$-reflections.
\end{thm*}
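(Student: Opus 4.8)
The plan is to translate the existence of a small separating set into an upper bound on the cohomological dimension of the separating variety, and then to bound that cohomological dimension from below using the topology of the arrangement of fixed spaces. We may assume $G\hookrightarrow\GL(V)$ is faithful, and since we work with affine cones throughout there is no need to assume the separating invariants are homogeneous.

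\emph{Step 1: a small separating set forces small cohomological dimension.} For finite $G$ the separating variety is the arrangement $\svg=\{(v,w)\in V\times V:Gv=Gw\}=\bigcup_{g\in G}\Gamma_g$, a union of $|G|$ linear subspaces of dimension $d$ in $V\times V$, where $\Gamma_g:=\{(v,gv):v\in V\}$. A set $E\subseteq\kvg$ is separating if and only if $\svg=\V(h\otimes1-1\otimes h:h\in E)$ as subsets of $V\times V$: the inclusion $\subseteq$ is automatic since the $h$ are invariant, and $\supseteq$ is precisely the separating property. Writing $R=\kvv$ and $I=I(\svg)\subseteq R$ for the radical defining ideal, a separating set of size $d+r-1$ exhibits $I$ as the radical of an ideal generated by $d+r-1$ elements; since local cohomology depends only on the radical of an ideal and the \v{C}ech complex on $d+r-1$ elements is concentrated in cohomological degrees $0,\dots,d+r-1$, this gives $\operatorname{cd}(\svg):=\max\{i:\HH{i}{I}{R}\neq0\}\ls d+r-1$. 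It therefore suffices to prove: if some isotropy subgroup $G_U$ is \emph{not} generated by $r$-reflections, then $\operatorname{cd}(\svg)\gs d+r$. The final ``in particular'' is then free, since $G$ itself is the isotropy subgroup $G_{V^G}$.

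\emph{Step 2: locating the relevant flat of the arrangement.} Fix $U$ with $H:=G_U$ not generated by $r$-reflections; replacing $U$ by $V^H$ we may assume $U=V^H$, and set $c:=\cod_V V^H\gs1$. Every $g\in H$ fixes $V^H$ pointwise and is therefore a $c$-reflection, so $H$ is generated by $c$-reflections and our hypothesis forces $r<c$. The diagonal $\Delta_{V^H}:=\{(v,v):v\in V^H\}$ equals $\bigcap_{g\in H}\Gamma_g$ and is a flat of $\svg$ of codimension $2d-\dim V^H=d+c$; moreover a flat of $\svg$ contains $\Delta_{V^H}$ if and only if it is already a flat of the subarrangement $\SVG{V}{H}:=\bigcup_{g\in H}\Gamma_g$. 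Hence the flats of $\svg$ lying strictly between the ambient space and $\Delta_{V^H}$ form exactly the proper part $\bar P$ of the intersection poset of $\SVG{V}{H}$ --- equivalently, fixing a linear complement $W$ to $V^H$ in $V$, the proper part of the intersection poset of the arrangement $\{\Gamma_g\cap(W\times V):g\in H\}$ of $c$-dimensional subspaces of $W\times V\cong\mathbb A^{d+c}$ with total intersection the origin. By the description of the local cohomology of a subspace arrangement in terms of the reduced simplicial homology of the order complexes of its open intervals (recalled in Section~\ref{Prelims}), the flat $\Delta_{V^H}$ contributes to $\HH{i}{I}{R}$ precisely when $\Htil{(d+c)-i-1}(\Delta(\bar P))\neq0$; thus $\operatorname{cd}(\svg)\gs(d+c)-1-j$ for every $j$ with $\Htil{j}(\Delta(\bar P))\neq0$, and we are reduced to exhibiting such a $j$ with $j\ls c-r-1$.

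\emph{Step 3: the poset-topology input, which is the crux.} The poset $\bar P$ is assembled from the fixed spaces $\{V^g:g\in H\}$ --- more precisely from the cosets of the subgroups of $H$ occurring as stabilizers --- and the $r$-reflections of $H$ are exactly those $g$ with $\cod V^g\ls r$. What remains to be proved is the purely combinatorial statement: \emph{if $H$ is not generated by its $r$-reflections, then $\Htil{j}(\Delta(\bar P))\neq0$ for some $j\ls c-r-1$}. I expect this to be the principal difficulty. The natural line of attack is to let $N\trianglelefteq H$ be the (proper, normal) subgroup generated by the $r$-reflections of $H$ and to compare $\bar P$ with the subposet built from $N$: the flats arising from elements of $N$ meet $V^H$ only inside $V^N$ and so cannot exhaust $\bar P$ near its top, which should let one either transport a nonzero homology class down to low degree by a Quillen-fiber or discrete-Morse argument, or produce such a class explicitly as a spherical cycle supported on the part of $\bar P$ unreachable from $N$. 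Controlling the homological degree of that class --- showing it can be taken of degree at most $c-r-1$, rather than merely nonzero in some degree --- is the delicate point, and is presumably where the detailed combinatorics of fixed-space arrangements of finite linear groups comes in.
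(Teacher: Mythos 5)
Your Steps 1 and 2 are sound and coincide with the paper's setup: Kemper's criterion identifies a separating set with a set-theoretic defining system for $\I(\svg)$, the \v{C}ech complex bounds the cohomological dimension by the size of such a system, and the \`Alvarez--Garc\'ia--Zarzuela formula converts nonvanishing of $\HH{i}{\I(\svg)}{\kvv}$ into nonvanishing reduced homology of an interval $P(>\!p)$ in the intersection poset, weighted by $\cod(p)$. The problem is Step 3, which you correctly identify as the crux and then leave unproven. Worse, your reduction commits to extracting the contribution from one specific flat, namely the diagonal of $V^{H}$ for $H=G_U$, and so you are forced to prove a delicate degree bound: that $\Htil{j}$ of the interval above that bottom flat is nonzero for some $j\ls c-r-1$. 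Nothing in the AGZ formula forces the relevant contribution to come from that flat (it may come from intermediate flats), so this target statement is both harder than what is needed and not established by anything you sketch; the Quillen-fiber/discrete-Morse ideas are not carried out, and controlling the homological degree there is exactly the difficulty you admit you cannot yet handle.

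The paper avoids this difficulty entirely by choosing a different flat. If some isotropy subgroup fails to be generated by $(\ls\! r)$-reflections, then (by the equivalence recorded in Subsection~\ref{r-ref}) there exists a \emph{minimal} reflecting subspace $W$ of codimension $r''>r$, i.e.\ one that is not an intersection of larger reflecting subspaces, equivalently $G_W$ is not generated by $(<\!r'')$-reflections. For the flat $p=(1\otimes 1)(W)$, the interval $\svg(>\!p)$ is isomorphic to $\SVG{V}{G_W}(>\!(1\otimes 1)(V^{G_W}))$, and by Lemma~\ref{connect_reflect} this poset is connected if and only if $G_W$ is generated by $(<\!r'')$-reflections; minimality therefore gives $\Htil{0}\neq 0$, with no higher homology or degree control needed. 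Since $\cod(p)=d+r''$, Theorem~\ref{lc_arrangements} yields $\HH{d+r''-1}{\I(\svg)}{\kvv}\neq 0$ with $d+r''-1\gs d+r$, contradicting the existence of a separating set of size $d+r-1$. So the missing idea in your proposal is precisely this choice of flat: replace the bottom flat $\Delta_{V^{G_U}}$ by the diagonal of a minimal reflecting subspace, where the required topological input degenerates to a connectivity statement already encoded in the group-theoretic hypothesis. As written, your argument has a genuine gap at its central step.
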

Setting $r=1$, we obtain the following strengthening of \cite[Theorem~1.1]{Duf1}: If there exists a separating set of size $d$, then $G$ is a rigid reflection group. Our approach utilizes \`Alvarez, Garc\'ia, and Zarzuela's computation of local cohomology with support in a subspace arrangement in \cite{AGZ}. Their formula is a local cohomology analogue of the celebrated Goresky-MacPherson Formula for the singular cohomology of the complement of a real subspace arrangement (see, e.g., \cite[Theorem~1.3.8]{Wac}); in this way, one can consider our results a link between the Goresky-MacPherson Formula and the Shephard-Todd Theorem.

In Section~\ref{Ref_Groups}, we focus on rigid reflection groups. Applying techniques from poset homology, we show that the cohomological obstructions to small separating sets utilized in Section~\ref{Lower} vanish for all integers greater than $d$. While there are rigid reflection groups for which the ring of invariants is not polynomial, some of the counterexamples have been proved to have a polynomial separating algebra, e.g.~\cite[Example~3.1]{Duf1}. We pose the conjecture that there exists a polynomial separating algebra if and only if $G$ is a rigid reflection group.

In Section~\ref{Examples}, we construct a variety of examples of separating sets for which the lower bound from the main theorem is realized: that is, we construct separating sets of the minimal possible cardinality. While we do not have a specific algorithm by which we create such sets, we are able to use an idea from the first author's thesis \cite[Section~5.2]{Duf2} (the ``triangle trick'') effectively in a wide range of cases.

%%%%%%%%%%%%%%%%%%%%%%%%%%%%%%%%%%%%%%%%%%%%%%%%%%%%
% Section: Separating sets and the separating variety
%%%%%%%%%%%%%%%%%%%%%%%%%%%%%%%%%%%%%%%%%%%%%%%%%%%%

\section{Preliminaries}\label{Prelims}

%-----------------------------------------------------------------------
\subsection{$r$-Reflections}\label{r-ref}

For any  subset $U$ of $V$, we define its \emph{isotropy subgroup} $G_U$ as follows:
\[G_U:=\{\sigma\in G \mid \sigma\cdot u=u, ~\forall u\in U\}\,.\]

An element $\sigma\in G$ is called an \emph{$r$-reflection} if its fixed subspace $V^\sigma$ has codimension~$r$. In particular, a $1$-reflection is a pseudoreflection, and a $2$-reflection is a bireflection. We say that $G$ is an \emph{$r$-reflection group} if it is generated by elements whose fixed space has codimension at most $r$.

A linear subspace $W\subset V$ is an \emph{$r$-reflecting subspace} if and only if $W$ has codimension $r$ in $V$ and its isotropy subgroup $G_{W}$ is non-trivial. An $r$-reflecting subspace will be called \emph{minimal} if it is not the intersection of $r'$-reflecting subspaces with $r'<r$. A group is  called a \emph{rigid $r$-reflection group} if every minimal reflecting subspace has codimension at most $r$. This is equivalent to requiring that every isotropy subgroup is an $r$-reflection group. We will say that $G$ is a \emph{(rigid) $(<\!r)$-reflection group} if there exists an $r'<r$ such that $G$ is a (rigid) $r'$-reflection group. For $r=1$ we will say \emph{(rigid) reflection group} instead of (rigid) 1-reflection group. 

In the non-modular case, it follows from the Shephard-Todd Theorem and Serre's Theorem that every reflection group is a rigid reflection group. For $r>1$, the condition of being a rigid $r$-reflection group is stronger than that of being an $r$-reflection group. For a concrete example, let $V$ be a $(2n+1)$-dimensional vector space over $\CC$ with basis $u_1,\dots,u_n,v_1,\dots,v_n,w$ and let $G:=C_2\times C_2=\langle \alpha, \beta \rangle$ act on $V$ by
\[
\begin{aligned}
&\alpha(u_i)=-u_i \qquad &\beta(u_i)&=u_i  \qquad &\textrm{ for } i=1,\ldots, n\,,\\
&\alpha(v_i)=v_i \qquad  &\beta(v_i)&=-v_i  \qquad &\textrm{ for } i=1,\ldots, n\,,\\
&\alpha(w)=-w\, \qquad &\beta(w)&=-w\,.  \qquad  &
\end{aligned}
\]
Here $G$ is generated by $(n+1)$-reflections, but $\langle \alpha\beta \rangle$ is an isotropy subgroup generated by a $(2n)$-reflection, thus $G$ is not a rigid $(<\!2n)$-reflection group.

%-----------------------------------------------------------------------
\subsection{The Separating Variety}

The \emph{separating variety} $\svg$ is a closed subvariety of the product $V\times V$ that completely determines the equivalence relation induced by $\kvg$ on $V$. More precisely, we have
\begin{align*}
\sv:&=\{(u,v)\in V\times V\mid f(u)=f(v), \textrm{ for all } f\in \kvg \}\\
&=\V_{V\times V}(f\otimes 1-1\otimes f\mid f\in\kvg)\,.
\end{align*}

A separating set can then be characterized as a subset $E\subset\kvg$ that cuts out the separating variety in $V \times V$, that is, such that $\V_{V\times V}(f\otimes 1-1\otimes f\mid f\in E)=\svg$. In ideal-theoretic terms,

\begin{prop}\cite[Corollary~2.6]{Kem2}\label{SepDefn} A set of invariants $\{f_1,\dots,f_t\}$ is a separating set for $G$ acting on $V$ if and only if 
\[\sqrt{(f_1\otimes 1 -1\otimes f_1,\dots, f_t \otimes 1 -1 \otimes f_t)}=\sqrt{(h\otimes 1 -1\otimes h\mid h\in\kvg)}=: \I(\svg)\,.\]
\end{prop}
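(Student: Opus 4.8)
The plan is to reduce the statement to the Hilbert Nullstellensatz together with the classical fact that the invariants of a finite group separate its orbits. Since $\kk$ is algebraically closed, two ideals of $\kvv$ have the same radical if and only if they have the same vanishing locus in $V \times V$, so it suffices to identify these loci. A point $(u,v) \in V \times V$ lies in $\V_{V\times V}(f_1\otimes 1 - 1\otimes f_1,\dots,f_t\otimes 1 - 1\otimes f_t)$ exactly when $f_i(u) = f_i(v)$ for $i = 1,\dots,t$, and it lies in $\V_{V\times V}(h\otimes 1 - 1\otimes h \mid h\in\kvg) = \svg$ exactly when $f(u) = f(v)$ for every $f \in \kvg$. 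Since each $f_i$ is itself an invariant, the first condition is implied by the second, so the inclusion $\svg \subseteq \V_{V\times V}(f_1\otimes 1 - 1\otimes f_1,\dots,f_t\otimes 1 - 1\otimes f_t)$ always holds; the content of the proposition lies in whether the reverse inclusion also holds, that is, whether $f_i(u) = f_i(v)$ for all $i$ forces $f(u) = f(v)$ for all $f\in\kvg$.

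The one substantive ingredient is the identification $\svg = \{(u,v)\in V\times V \mid G\cdot u = G\cdot v\}$. The inclusion $\supseteq$ is immediate, because invariant functions are constant on $G$-orbits: if $v = \sigma u$ then $f(v) = f(\sigma u) = f(u)$ for all $f\in\kvg$. For $\subseteq$, suppose $G\cdot u \neq G\cdot v$; then $G\cdot u$ and $G\cdot v$ are disjoint finite subsets of $V$, so by interpolation there is a polynomial $p\in\kv$ vanishing on $G\cdot u$ and identically equal to $1$ on $G\cdot v$. The product $f := \prod_{\sigma\in G}(\sigma\cdot p)$ is $G$-invariant, since any $\tau\in G$ merely permutes the factors; moreover each factor $\sigma\cdot p$ evaluated at $u$ equals $p$ evaluated at a point of $G\cdot u$, hence vanishes, so $f(u) = 0$, while likewise $f(v) = 1$. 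Thus $(u,v)\notin\svg$. Note that this product-of-translates argument makes no use of $|G|$ being invertible, so it is valid in the modular case.

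Combining these, $\{f_1,\dots,f_t\}$ is a separating set if and only if $f_i(u) = f_i(v)$ for all $i$ implies $G\cdot u = G\cdot v$, i.e.\ if and only if $\V_{V\times V}(f_1\otimes 1 - 1\otimes f_1,\dots,f_t\otimes 1 - 1\otimes f_t) \subseteq \svg$. Together with the always-valid reverse inclusion, this says precisely that the two varieties coincide, and the Nullstellensatz then upgrades the equality of varieties to the asserted equality of radicals, whose common value is $\I(\svg)$ by definition. I expect no real obstacle; the only point requiring a little care is keeping the $G$-action on $\kv$ straight, and this is avoided above by working through the slogan that invariants are constant on orbits.
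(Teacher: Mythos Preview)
Your argument is correct. Note, however, that the paper does not actually prove this proposition: it is stated with a citation to \cite[Corollary~2.6]{Kem2} and left without proof, so there is no in-paper argument to compare against. Your route---reduce to equality of zero sets via the Nullstellensatz, then invoke the fact that invariants of a finite group separate orbits---is the standard one, and the orbit-separation ingredient you establish is precisely what the paper cites from \cite[Lemma~2.1]{DKW} a few lines later.
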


For actions of finite groups, the invariants actually separate orbits (see, for example, \cite[Lemma~2.1]{DKW}) and so the separating variety coincides with the graph of the action
\[\Gamma_{V,G}:=\{(v,\sigma\cdot v) \mid v\in V,~\sigma\in G\}\,.\]
This provides significant geometric insight into $\svg$:

\begin{lem}[{c.f. \cite[Proposition~3.1]{Duf1}}]\label{SV}\label{ints_are_isotrops} Let $G$ be a finite group acting linearly on $V$.
\begin{itemize}[leftmargin=9mm]
\item[(a)]  The separating variety has an irreducible decomposition of the form
\[\svg = \bigcup_{\sigma\in G} (1\otimes \sigma)(V)\]
with each $(1\otimes \sigma)(V)$ a linear subspace isomorphic to $V$.
\item[(b)] If $\sigma, \tau \in G$, then ${(1\otimes \sigma)(V)\cap (1\otimes \tau)(V)}=(1\otimes \tau)( V^{\tau^{-1}\sigma})$, which has dimension equal to that of the subspace fixed by $\tau^{-1}\sigma$ in $V$. Every non-empty intersection of components $(1\otimes\sigma)(V)$ with  $\sigma \in G$  is of the form ${(1\otimes \gamma)(V^H)}$, where $H\leq G$ is an isotropy subgroup and $\gamma\in G/H$.
\end{itemize}
\end{lem}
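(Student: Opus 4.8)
The plan is to reduce the entire statement to elementary linear algebra about the graphs of the linear automorphisms $\sigma\in G$, using the identification $\svg=\Gamma_{V,G}$ recalled just above the lemma. For (a), I would write $\Gamma_{V,G}=\bigcup_{\sigma\in G}\{(v,\sigma\cdot v)\mid v\in V\}=\bigcup_{\sigma\in G}(1\otimes\sigma)(V)$. Each $(1\otimes\sigma)(V)$ is the image of $V$ under the injective linear map $v\mapsto(v,\sigma\cdot v)$, hence a $d$-dimensional linear subspace of $V\times V$ isomorphic to $V$; in particular it is irreducible and closed, and the union is finite. To see that this is the irreducible decomposition, note that $(1\otimes\sigma)(V)=(1\otimes\tau)(V)$ forces $\sigma\cdot v=\tau\cdot v$ for all $v\in V$, so the distinct subspaces among the $(1\otimes\sigma)(V)$ all have dimension $d$ and hence none is contained in another; irredundancy follows.

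For the first assertion of (b), observe that a point lies in $(1\otimes\sigma)(V)\cap(1\otimes\tau)(V)$ exactly when it can be written as $(v,\sigma\cdot v)=(v,\tau\cdot v)$ for a single $v\in V$, i.e.\ when $\tau^{-1}\sigma\cdot v=v$. Hence the intersection equals $\{(v,\tau\cdot v)\mid v\in V^{\tau^{-1}\sigma}\}=(1\otimes\tau)(V^{\tau^{-1}\sigma})$, and its dimension is $\dim V^{\tau^{-1}\sigma}$ since the map $v\mapsto(v,\tau\cdot v)$ is injective and linear. For the second assertion, take a nonempty subset $S\subseteq G$, fix $\tau\in S$, and intersect the previous formula over $\sigma\in S$ to get $\bigcap_{\sigma\in S}(1\otimes\sigma)(V)=(1\otimes\tau)\!\left(\bigcap_{\sigma\in S}V^{\tau^{-1}\sigma}\right)$. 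A vector is fixed by every $\tau^{-1}\sigma$ ($\sigma\in S$) precisely when it is fixed by the subgroup $L\leq G$ that these elements generate, so the inner intersection equals $V^L$.

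The one point that requires a little care --- and the only real obstacle --- is that $L$ itself is not literally an isotropy subgroup $G_U$. I would remedy this by passing to $H:=G_{V^L}$: since $L\leq H$ we get $V^H\subseteq V^L$, and since $H$ fixes $V^L$ pointwise we get $V^L\subseteq V^H$, so $V^L=V^H$ with $H$ a genuine isotropy subgroup. Finally, $(1\otimes\tau)(V^H)$ depends only on the coset $\tau H$, because any $h\in H$ acts trivially on $V^H$; writing $\gamma$ for this coset gives the claimed form $(1\otimes\gamma)(V^H)$ with $\gamma\in G/H$. Everything outside this last replacement is routine bookkeeping with graphs of linear maps, so I expect the write-up to be short.
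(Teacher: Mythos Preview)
Your argument is correct and complete; the only subtlety you flagged---replacing the generated subgroup $L$ by the genuine isotropy subgroup $H=G_{V^L}$ so that the statement literally matches---is handled properly, and the observation that $(1\otimes\tau)(V^H)$ depends only on the coset $\tau H$ is exactly right. Note, however, that the paper does not supply its own proof of this lemma: it is stated with a ``c.f.'' citation to \cite[Proposition~3.1]{Duf1} and no argument, so there is nothing in the present paper to compare your approach against. Your write-up is precisely the kind of short, self-contained verification one would expect for such a statement.
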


\begin{rmk}\label{alg_closed} The assumption that $\kk$ is algebraically closed is essential in Proposition~\ref{SepDefn}. However, one may obtain results in the non-algebraically closed case by considering a \emph{geometric separating set}: for $G$ finite, this is a subset of $\kvg$ that separates orbits of $G$ in $V \otimes_{\kk} \bar{\kk}$ (see \cite[Section~2]{Duf1}).
By \cite[Theorem~2.1]{Duf1}, a geometric separating set is characterized by the ideal-theoretic equality in Proposition~\ref{SepDefn}. Accordingly, the results of Section~\ref{Lower} hold for $\kk\neq\bar{\kk}$ if one replaces the phrase ``separating set'' with ``geometric separating set.'' Further, since $\kvg$ is a geometric separating set, Corollary~\ref{emb_dim} holds verbatim for all $\kk$.
\end{rmk}

\subsection{Posets}

For an arrangement of linear subspaces $X\subset \mathbb{A}^m$, let $P(X)$ denote the \emph{intersection poset} of $X$: the collection, ordered by inclusion, of linear subspaces that occur as intersections of components of $X$. For $p\in P(X)$, the \emph{interval} $P(>\!p)$ is the subposet of $P(X)$ consisting of elements containing $p$. One defines $P(<\!p), P(\gs\!p),$ and $P(\ls\!p)$ analogously. The reduced homology of a poset $P$ with coefficients in $\kk$ will be denoted by $\Htil{\bullet}(P; \kk)$: this is the reduced simplicial homology of the simplicial complex whose vertices are elements of the poset, and whose faces are the chains.

In our setting, for a linear action of a finite group, the separating variety $\svg$ is
a subspace arrangement. By abuse of notation, we will also denote its intersection poset by $\svg$. Note that if $W\subseteq V$ is a subspace, then $\svggr{1}{W}\iso\svgw(>\!(1\otimes 1)(W))$.

We will also consider the poset $\rvg$ of $r$-reflecting subspaces (all possible $r$'s). The two posets $\svg$ and $\rvg$ are related by the following lemma.

\begin{lem}\label{rvg} For any $\sigma \in G$, the interval $\svgle{\sigma}{V}$ is isomorphic to $\rvg$.
\end{lem}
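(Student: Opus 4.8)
The plan is to exhibit an explicit order-preserving bijection between $\svgle{\sigma}{V}$ and $\rvg$ and to check that both it and its inverse respect containments. First I would use Lemma~\ref{SV}(b) to understand the elements of $\svgle{\sigma}{V}$: these are the proper intersections of components of $\svg$ that are contained in $(1\otimes\sigma)(V)$, hence by part~(b) they are exactly the subspaces of the form $(1\otimes\sigma)(V^H)$ where $H\le G$ is an isotropy subgroup and $V^H\subsetneq V$. Since $(1\otimes\sigma)$ is a linear isomorphism of $V\times V$ carrying $(1\otimes\sigma)(V)$ onto itself, passing through it reduces the problem to identifying the subposet of $\svg$ of proper intersections contained in the single component $(1\otimes 1)(V)$ with the poset $\rvg$.

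The map I would write down sends an $r$-reflecting subspace $W\subset V$ to the subspace $(1\otimes\sigma)(W)\subset (1\otimes\sigma)(V)$; conversely, given an element of $\svgle{\sigma}{V}$, which by the previous paragraph has the form $(1\otimes\sigma)(V^H)$ with $G_{V^H}$ containing $H$ and hence nontrivial, I would send it to $V^H$, which is a reflecting subspace of some codimension $r\ge 1$ precisely because its isotropy group is nontrivial. The key points to verify are: (i) every element of $\svgle{\sigma}{V}$ really is $(1\otimes\sigma)$ of a reflecting subspace — this is where one needs that $V^H$ is cut out as an intersection of $V^{\tau^{-1}\sigma}$'s via Lemma~\ref{SV}(b), so that it genuinely appears in the intersection poset and its isotropy subgroup (which contains $H$) is nontrivial; (ii) conversely every reflecting subspace $W$ occurs: if $G_W\ne 1$ pick $\tau\ne 1$ in $G_W$, then $(1\otimes\sigma)(W)\subseteq (1\otimes\sigma)(V)\cap(1\otimes\sigma\tau^{-1})(V)$ by part~(b), so after intersecting over a generating set of $G_W$ one recovers $(1\otimes\sigma)(W)$ as an honest intersection of components lying below $(1\otimes\sigma)(V)$; and (iii) the correspondence is inclusion-preserving in both directions, which is immediate since $(1\otimes\sigma)$ is a bijection on subspaces and reverses nothing.

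The main obstacle I anticipate is point (i)/(ii): one must be careful that the subspaces appearing as intersections of components in $\svg$ are exactly the $V^H$ for $H$ an \emph{isotropy} subgroup (not an arbitrary subgroup), and conversely that each reflecting subspace $W$, though a priori only of the form $V^\tau$ for a single $\tau$ or an intersection of such, indeed lies in $\rvg$ and matches an intersection of components below $(1\otimes\sigma)(V)$. This is precisely the content packaged into Lemma~\ref{SV}(b), so the argument amounts to unwinding that lemma carefully rather than proving anything new; once the dictionary ``isotropy subgroups $\leftrightarrow$ fixed subspaces with nontrivial isotropy $\leftrightarrow$ reflecting subspaces'' is set up, the isomorphism of posets and the independence of the choice of $\sigma$ both fall out formally.
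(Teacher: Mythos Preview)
Your proposal is correct and follows essentially the same route as the paper: reduce to the case $\sigma=1$ via the automorphism of $\svg$ given by acting by $\sigma$ on the second factor, then identify the intersections lying inside the diagonal component with the diagonal embeddings of the fixed subspaces $V^{\langle\sigma_1,\dots,\sigma_m\rangle}$. One small slip: the linear automorphism $(1\otimes\sigma)$ of $V\times V$ does not carry $(1\otimes\sigma)(V)$ onto itself---it carries $(1\otimes 1)(V)$ onto $(1\otimes\sigma)(V)$---so the reduction is via $(1\otimes\sigma^{-1})$ (or equivalently the map the paper uses, applying $\sigma$ to the second coordinate), but this does not affect the argument.
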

\begin{proof} The map on $\Gamma_{V,G}$ given by applying $\sigma$ to the second coordinate is an isomorphism, thus $\svgleq{\sigma}{V}\iso \svgleq{1}{V}$. Now,
\begin{align*}
(1\otimes 1)(V) \cap (1 \otimes \sigma_1)(V) &\cap \cdots \cap (1 \otimes \sigma_m)(V) \\
&= \{ (v,v)\, | \,v=\sigma_1(v)=\cdots=\sigma_m(v) \}\\
&= (1\otimes 1)(V^{\langle \sigma_1, \dots, \sigma_m \rangle})\,,
\end{align*}
so that the intersections of components of $\svg$ contained in $(1\otimes 1)(V)$ coincide with the diagonal embeddings of reflecting subspaces.
\end{proof}

It is worth noting that the order on $\rvg$ used here is dual to that most commonly used in the literature on subspace arrangements.

%-----------------------------------------------------------------------
\subsection{Local Cohomology}

For the convenience of the reader unfamiliar with local cohomology, we give a quick review with an eye towards the main fact we will employ. A welcoming source on local cohomology which includes the material below is \cite{IILLMMSW}.
For an ideal $I$ in a commutative noetherian ring $R$ and an $R$-module $M$, the \emph{$I$-torsion} part of $M$ is
\[\Gamma_I(M)=\{m\in M \ | \ I^t m = 0 \ \text{for some}\ t\in \NN\}\,.\]
The assignment $\Gamma_I(-)$ is easily checked to form a left-exact functor (with maps given by restriction), and its right-derived functors are defined as the \emph{local cohomology functors with support in $I$,} denoted $\HH{i}{I}{-}$. Since $\Gamma_I(-)=\Gamma_{J}(-)$ if $\sqrt{I}=\sqrt{J},$ we also have $\HH{i}{I}{-}=\HH{i}{J}{-}$.

Given a generating set $I=(f_1,\dots,f_t)$, the local cohomology of $I$ can also be computed as the cohomology of the \v{C}ech complex:
\[\HH{i}{I}{M}=\operatorname{H}^i  \Big( \quad 0\rightarrow M \rightarrow \bigoplus_j M_{f_j} \rightarrow \bigoplus_{j<j'} M_{f_j f_{j'}} \rightarrow \cdots \rightarrow M_{f_1\cdots f_t} \rightarrow 0 \quad \Big)\ ,\]
where the maps on each component are $\pm 1$ times the natural maps, with the signs chosen so that the sequence above forms a complex. Consequently, if $\HH{i}{I}{R}\neq 0$ and $f_1,\dots, f_t$ generates $I$ up to radical, we necessarily have $t\gs i$, since the \v{C}ech complex must have at least $i$ terms if its $i^\text{th}$ cohomology is non-zero.

%%%%%%%%%%%%%%%%%%%%%%%%%%%%%%%%%%%%%%%%%%%%%%%%%%%%
% Section: Lower bounds on the size of separating sets
%%%%%%%%%%%%%%%%%%%%%%%%%%%%%%%%%%%%%%%%%%%%%%%%%%%%

\section{Lower bounds on the size of separating sets}\label{Lower}

In this section, we give a lower bound on the size of a separating set for a ring of invariants of a finite group. We reiterate the assumption that $\kk$ is algebraically closed; see Remark~\ref{alg_closed} for the non-algebraically closed case. The following lemma will be key to our applications.

\begin{lem}\label{connect_reflect} The separating variety is connected in codimension $\ls\! r$ if and only if the action of $G$ is generated by $(\ls\! r)$-reflections.
\end{lem}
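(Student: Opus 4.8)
The plan is to translate the geometric notion ``connected in codimension $\ls\! r$'' into a combinatorial condition on the irreducible components of $\svg$, and then to recognize that condition as the connectivity of a Cayley graph of $G$ whose edges record $(\ls\! r)$-reflections.

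I would first reduce to a chain condition on components. Recall the standard characterization of connectedness in codimension for an equidimensional scheme: if every irreducible component of a noetherian scheme $X$ has dimension $d$, then $X$ is connected in codimension $\ls\! r$ (that is, $X$ is not disconnected by removing any closed subset of codimension $>r$) if and only if for every pair of irreducible components $Y,Y'$ of $X$ there is a chain of irreducible components $Y=Y_0,Y_1,\dots,Y_k=Y'$ with $\cod_X(Y_{i-1}\cap Y_i)\ls r$ for all $i$. By Lemma~\ref{SV}(a) the separating variety is of this type: it is reduced, and its irreducible components are the $d$-dimensional subspaces $(1\otimes\sigma)(V)$, $\sigma\in G$. (It is also automatically connected, since $(0,0)$ lies on every component.)

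Next I would identify the resulting ``dual graph''. By Lemma~\ref{SV}(b), for $\sigma,\tau\in G$ the intersection $(1\otimes\sigma)(V)\cap(1\otimes\tau)(V)=(1\otimes\tau)(V^{\tau^{-1}\sigma})$ has codimension $\cod_V V^{\tau^{-1}\sigma}$ in $\svg$. Hence the graph on vertex set $G$ with an edge between $\sigma$ and $\tau$ exactly when $\cod_V V^{\tau^{-1}\sigma}\ls r$ is precisely the Cayley graph of $G$ with connection set
\[S_r:=\{\,g\in G\mid \cod_V V^g\ls r\,\},\]
namely the set of $(\ls\! r)$-reflections (together with the identity, which only adds loops). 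By the previous paragraph, $\svg$ is connected in codimension $\ls\! r$ if and only if this Cayley graph is connected. Finally, $S_r$ is symmetric because $V^g=V^{g^{-1}}$, and for a symmetric connection set the connected component of the identity in the Cayley graph is the subgroup $\langle S_r\rangle$ (its cosets being the other components); so the graph is connected precisely when $S_r$ generates $G$, i.e.\ precisely when $G$ is generated by $(\ls\! r)$-reflections.

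The main obstacle is essentially bookkeeping rather than a deep difficulty: one must pin down the precise meaning of ``connected in codimension $\ls\! r$'' and invoke the equidimensional chain characterization, so that codimensions computed in $\svg$ agree with those computed inside any single component; and one must note that the only element of $S_r$ fixing all of $V$ is the identity, which does not affect connectivity. After that, the statement is just the familiar dictionary between generation of a group and connectivity of its Cayley graph.
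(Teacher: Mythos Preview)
Your proof is correct and follows essentially the same route as the paper's: both reduce ``connected in codimension $\ls r$'' to the existence of chains of components with pairwise intersections of codimension $\ls r$, compute those codimensions via Lemma~\ref{SV}(b), and observe that this is equivalent to $G$ being generated by $(\ls r)$-reflections. Your explicit use of Cayley-graph language is a clean packaging of exactly the same argument.
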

\begin{proof}
By Lemma~\ref{SV}~(a), the separating variety $\svg$ is connected in codimension $\ls\! r$ if and only if, for any $\sigma,\sigma' \in G$, there is a sequence of components
\[(1\otimes {\sigma})(V)=(1\otimes {\sigma_0})(V)\ ,\ (1\otimes {\sigma_1})(V) \ ,\ \dots \ ,\ (1\otimes {\sigma_r})(V)=(1\otimes {\sigma'})(V) \] 
such that $(1\otimes {\sigma_i})(V) \cap (1\otimes \sigma_{i+1})(V)$ has codimension $\ls r$. By Lemma~\ref{SV}~(b), $\dim {(1\otimes {\sigma_i})(V) \cap (1\otimes \sigma_{i+1})(V)} = \dim V^{\sigma_{i+1}^{-1}\sigma_{i}}$. Thus, $\svg$ is connected in codimension $\ls\! r$ if and only if for any $\sigma,\sigma' \in G$ there exist $(\ls\! r)$-reflections 
\[\tau_1=\sigma_0^{-1}\sigma_1\ ,\ \tau_2=\sigma_1^{-1}\sigma_2\ ,\ \dots \ ,\ \tau_r=\sigma_{r-1}^{-1}\sigma_r\]
 such that $\sigma=\tau_1\cdots \tau_r\sigma'\,.$ But this just means that $G$ is generated by $(\ls\! r)$-reflections.
\end{proof}

We first note that a connectedness theorem of Grothendieck allows for the following generalization of \cite[Theorem~1.1]{Duf1}.

\begin{prop}\label{weaker_bound} If there exists a separating set of size $d+r-1$, then the action of $G$ is generated by $(\ls\! r)$-reflections.
\end{prop}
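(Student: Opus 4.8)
The plan is to deduce this from the classical connectedness theorem of Grothendieck together with Lemma~\ref{connect_reflect}. Suppose $\{f_1,\dots,f_{d+r-1}\}$ is a separating set. By Proposition~\ref{SepDefn}, the ideal $(f_1\otimes 1 - 1\otimes f_1,\dots, f_{d+r-1}\otimes 1 - 1\otimes f_{d+r-1})$ cuts out $\svg$ set-theoretically inside $V\times V\cong \mathbb{A}^{2d}$. Thus $\svg$ is defined, as a subscheme of affine $2d$-space, by $d+r-1$ equations, equivalently it is a set-theoretic complete intersection away from codimension issues: its defining ideal has at most $d+r-1$ generators, so $\cod_{V\times V}(\svg)$-many of them... more precisely, $\svg$ is a closed subvariety of $\mathbb{A}^{2d}$ of dimension $d$ (each component $(1\otimes\sigma)(V)$ has dimension $d$ by Lemma~\ref{SV}~(a)), hence codimension $d$, and it is cut out by $d+r-1 = \cod(\svg) + (r-1)$ equations.

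The key step is to invoke Grothendieck's connectedness theorem in the following form: if a closed subvariety $Y$ of $\mathbb{A}^n_{\kk}$ (or, after taking the projective cone, of $\PP^n$) is defined set-theoretically by $c + s$ equations and has codimension $c$, then $Y$ is connected in codimension $s$. (See, e.g., Brodmann--Sharp, \emph{Local Cohomology}, or Grothendieck SGA2; the statement is that removing the singular locus of $Y$, or rather any closed subset of codimension $\gs s+1$ in $Y$, leaves $Y$ connected.) Applying this with $Y = \svg$, $c = d$, and $s = r-1$, we conclude that $\svg$ is connected in codimension $\ls\! r-1 < r$, hence certainly connected in codimension $\ls\! r$. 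Here I should be slightly careful about whether the cleanest route is through the affine statement or through homogenizing; since the $f_i\otimes 1 - 1\otimes f_i$ need not be homogeneous of the same degree, it is cleanest to pass to the projective closure in $\PP^{2d}$, note the hyperplane at infinity meets things in lower dimension, apply the projective form of Grothendieck's theorem, and descend. Alternatively, one can phrase the whole argument using the local cohomology criterion recalled at the end of Section~\ref{Prelims}: the punctured-spectrum connectedness of $\svg$ is controlled by $\HH{2d-1}{\svg}{\kvv}$ or related modules, and a bound on the number of defining equations bounds these, but for this weaker proposition the direct appeal to Grothendieck is shortest.

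Once $\svg$ is known to be connected in codimension $\ls\! r$, Lemma~\ref{connect_reflect} immediately gives that the action of $G$ is generated by $(\ls\! r)$-reflections, which is exactly the claim.

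The main obstacle I anticipate is purely bookkeeping around the non-homogeneity of the $f_i\otimes 1 - 1\otimes f_i$ and the precise hypotheses of the version of Grothendieck's connectedness theorem being cited — one wants the form that concludes ``connected in codimension $s$'' from ``$\cod Y$ generators plus $s$ extra,'' and one must confirm that passing to the projective cone does not spoil the codimension count or introduce spurious components at infinity. These are standard but need a careful citation. The genuinely new content (the translation between connectedness in codimension $\ls\! r$ and generation by $(\ls\! r)$-reflections) is already isolated in Lemma~\ref{connect_reflect}, so the proposition itself is a short corollary; the sharper Theorem~\ref{thm-MainThm}, with its isotropy-subgroup refinement, is presumably where the local cohomology computation of \cite{AGZ} is actually needed, and this proposition is stated mainly to record what the classical tool alone yields.
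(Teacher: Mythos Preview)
Your approach is essentially the same as the paper's: Proposition~\ref{SepDefn} gives $d+r-1$ set-theoretic equations for $\svg$, Grothendieck's connectedness theorem (the paper cites \cite[Expos\'e~XIII, Th\'eor\`eme~2.1]{SGA2} directly) yields connectedness in codimension $\ls r$, and Lemma~\ref{connect_reflect} finishes. Your anxieties about homogeneity and passing to $\PP^{2d}$ are unnecessary, since $\svg$ is a union of linear subspaces through the origin and the SGA2 statement is for complete local rings --- completing at the origin recovers the global connectedness-in-codimension of the cone without any projective detour.
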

\begin{proof} By Proposition~\ref{SepDefn}, if there is a separating set of size $d+r-1$, then $\I(\svg)$ is set-theoretically defined by $d+r-1$ equations. By \cite[Expos\'e XIII, Th\'eor\`eme~2.1]{SGA2}, if $\I(\svg)$ can be set-theoretically cut out by $d+r-1$ or fewer equations, then $\svg$ is connected in codimension $\ls\! r$. Then, by Lemma~\ref{connect_reflect}, $G$ is generated by $(\ls\! r)$-reflections.
\end{proof}

A stronger result can be obtained by examining the local cohomology with support in $\I(\svg)$. Local cohomology with support in a subspace arrangement is studied by \`Alvarez, Garc\'ia, and Zarzuela in \cite{AGZ}. Following along the lines of Bj\"orner and Ekedahl's computation of $\ell$-adic cohomology of such spaces, they establish a Mayer-Vietoris spectral sequence for local cohomology and show that it degenerates for subspace arrangements, thus obtaining a Goresky-MacPherson analogue in local cohomology. In particular, their formula provides a combinatorial characterization of the vanishing and non-vanishing of the local cohomology modules.

\begin{thm}\label{lc_arrangements}
\begin{itemize}
\item[(a)]\cite[p.~39]{AGZ}, \cite[Theorem~2.1]{Lyu} If $I_1,\dots,I_t\subset R$ are ideals, and $M$ an $R$-module, then there is a Mayer-Vietoris spectral sequence
\[ E_1^{-p,q}=\bigoplus_{i_0<\cdots<i_p} \!\HH{q}{I_{i_0}+\cdots+I_{i_p}}{M} \quad \Longrightarrow \quad \HH{q-p}{I_1 \cap \cdots \cap I_t}{M}\,.\]
\item[(b)]\cite[Corollary~1.3]{AGZ} If $I_1,\dots,I_t\subset R$ are ideals of linear subspaces in a polynomial ring, then the spectral sequence above degenerates at $E_2$, and for all $q\gs 0$ there is an associated graded module of the local cohomology module $\HH{q}{I_1 \cap \cdots \cap I_t}{R}$ with
\[
\qquad\qquad\gr\big(\HH{q}{I_1 \cap \cdots \cap I_t}{R}\big) \cong
\bigoplus_{p\in P}\,\Big[ \HH{\cod(p)}{I(p)}{R} \otimes_{\kk} \Htil{\cod(p)-q-1} (P(>\!p);\kk)\Big]\,,
\]
where $P$ is the intersection poset of $\V(I_1 \cap \cdots \cap I_t)$.
\end{itemize}
\end{thm}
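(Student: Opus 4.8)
The plan is to prove Theorem~\ref{lc_arrangements}, the Mayer--Vietoris spectral sequence of part~(a) and its degeneration in part~(b), following \`Alvarez, Garc\'ia, and Zarzuela. For part~(a), I would construct the spectral sequence from the \v{C}ech--style double complex: for ideals $I_1,\dots,I_t$, form the complex of functors whose $p$-th term is $\bigoplus_{i_0<\cdots<i_p}\Gamma_{I_{i_0}+\cdots+I_{i_p}}(-)$ with the usual alternating-sum differential. The key input is that $\Gamma_{I}+\Gamma_{J}=\Gamma_{I\cap J}$ as subfunctors of the identity (since a section is $(I\cap J)$-torsion iff it is a sum of an $I$-torsion and a $J$-torsion section — one direction is trivial, the other uses $\sqrt{IJ}=\sqrt{I\cap J}$ and primary decomposition of the annihilator), which makes this complex a resolution of $\Gamma_{I_1\cap\cdots\cap I_t}(-)$ in the appropriate derived sense. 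Taking an injective resolution of $M$ and forming the resulting first-quadrant double complex, the two spectral sequences of the double complex give: one collapses to $\HH{\bullet}{I_1\cap\cdots\cap I_t}{M}$ (using that $\Gamma_{I_\bullet}$ of an injective is injective, so the complex of $\Gamma$'s of an injective resolution computes the derived functor of $\Gamma_{I_1\cap\cdots\cap I_t}$), and the other is exactly $E_1^{-p,q}=\bigoplus_{i_0<\cdots<i_p}\HH{q}{I_{i_0}+\cdots+I_{i_p}}{M}$. This is essentially Lyubeznik's argument cited alongside.

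For part~(b), the reindexing into a sum over the intersection poset $P$ is the combinatorial heart. Each sum $I_{i_0}+\cdots+I_{i_p}$ is (up to radical) the ideal of the intersection subspace $W=V(I_{i_0})\cap\cdots\cap V(I_{i_p})$, which is some element $p\in P$ (or the irrelevant situation when the intersection is empty, contributing nothing); so $\HH{q}{I_{i_0}+\cdots+I_{i_p}}{R}=\HH{q}{I(p)}{R}$, which is nonzero only for $q=\cod(p)$, where it is a well-understood module supported at that linear subspace. Grouping the terms of $E_1$ by the subspace $p$ they cut out, the $E_1$-page differential in the $p$-strand becomes (after the shift) the simplicial cochain differential of the order complex — more precisely of the nerve of the cover of $P(>\!p)$, or of $P(\gs p)$ with its cone point removed — so the $E_2$-term in strand $p$ is $\HH{\cod(p)}{I(p)}{R}\otimes_{\kk}\Htil{\cod(p)-q-1}(P(>\!p);\kk)$. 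The final, and I expect hardest, point is the degeneration at $E_2$: one must show all higher differentials vanish. The cleanest route is a grading/weight argument — $\HH{\cod(p)}{I(p)}{R}$ carries an internal $\ZZ^m$-grading (or a multiplicity filtration by the linear subspace $p$) that is preserved by all differentials, and distinct $p$'s sit in incomparable multidegree strata, so no later differential can connect strand $p$ to strand $p'$; combined with the fact that within a fixed strand the complex is already concentrated (only $q=\cod(p)$ survives), there is no room for $d_r$ with $r\ge 2$ to be nonzero. This is where I would lean most heavily on \cite[Corollary~1.3]{AGZ}, since making the weight bookkeeping precise — identifying exactly which graded pieces of $\HH{\cod(p)}{I(p)}{R}$ appear and checking they are pairwise ``separated'' enough to forbid higher differentials — is the delicate step; the spectral sequence construction and the poset reindexing are comparatively formal.
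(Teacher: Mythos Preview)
The paper does not prove this theorem: it is stated with citations to \cite{AGZ} and \cite{Lyu} and then immediately used as a black box in the proof of Theorem~\ref{nonvanishing}. There is therefore no proof in the paper to compare your proposal against.

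That said, since you have sketched an argument, two remarks on its content. In part~(a), your assertion that $\Gamma_I(M)+\Gamma_J(M)=\Gamma_{I\cap J}(M)$ for arbitrary $M$ is false: take $R=\kk[x,y]$, $I=(x)$, $J=(y)$, $M=R/(xy)$; then every element of $M$ is $(xy)$-torsion, but the $(x)$-torsion and $(y)$-torsion submodules together span only the image of $(x,y)$, missing the class of $1$. What is true, and what the spectral sequence actually needs, is that the \v{C}ech-style complex of $\Gamma$-functors is exact on \emph{injective} modules; this follows from the structure theory of injectives over a noetherian ring. Your overall architecture for (a) is correct once this is repaired.

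In part~(b), your proposed degeneration argument --- that the modules $\HH{\cod(p)}{I(p)}{R}$ for distinct $p$ live in disjoint multidegree strata, forbidding higher differentials --- does not work as stated: these local cohomology modules can and do share multidegrees (e.g.\ two coordinate subspaces of the same codimension give modules whose $\ZZ^m$-graded supports overlap). The actual argument in \cite{AGZ} is more delicate, passing through the category of straight modules (in characteristic zero via $D$-module structure, with a separate argument in positive characteristic) to show that the relevant $\operatorname{Ext}$ groups between the building blocks vanish. Your identification of the $E_2$-page with the poset-homology formula is correct in spirit, but the degeneration is not the formal consequence you suggest.
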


With this description of the local cohomology in hand, we obtain the following strengthening of Proposition~\ref{weaker_bound}.

\begin{thm}\label{nonvanishing}
Let $r_1,\ldots, r_s$ be the codimensions of minimal reflecting subspaces. Then $\HH{d+r_i-1}{\I(\svg)}{\kvv}\neq 0$. In particular, if $r$ is the maximal codimension of a minimal reflecting subspace, then every separating set has size at least $d+r-1$.
\end{thm}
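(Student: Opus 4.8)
The plan is to use Theorem~\ref{lc_arrangements}(b) applied to the subspace arrangement $\svg$ (with components $(1\otimes\sigma)(V)$ for $\sigma\in G$, so that $R=\kvv$ and $\I(\svg)=I_1\cap\cdots\cap I_t$) to detect nonvanishing of the relevant local cohomology modules, and then invoke the \v{C}ech complex bound recalled at the end of Section~\ref{Prelims}: if $\HH{i}{\I(\svg)}{\kvv}\neq 0$ and $\I(\svg)$ is generated up to radical by $t$ elements, then $t\geq i$. Combined with Proposition~\ref{SepDefn}, a separating set of size $t$ forces $\I(\svg)$ to be cut out up to radical by $t$ elements, hence $t\geq d+r-1$ once we show $\HH{d+r-1}{\I(\svg)}{\kvv}\neq 0$.

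So the heart of the argument is to show $\HH{d+r_i-1}{\I(\svg)}{\kvv}\neq 0$ for each codimension $r_i$ of a minimal reflecting subspace. By Theorem~\ref{lc_arrangements}(b), it suffices to exhibit a single $p$ in the intersection poset $P=\svg$ for which the summand
\[
\HH{\cod(p)}{I(p)}{\kvv}\otimes_\kk \Htil{\cod(p)-(d+r_i-1)-1}(\svg(>\!p);\kk)
\]
is nonzero, and moreover to argue that this summand cannot be cancelled by the degeneration, so that it persists in the associated graded module. The natural choice is $p=(1\otimes 1)(W)$ where $W$ is a minimal $r_i$-reflecting subspace. Then $\dim W = d - r_i$, and as a subspace of $V\times V$ its codimension is $\cod(p)=2d-(d-r_i)=d+r_i$. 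The first tensor factor $\HH{d+r_i}{I(p)}{\kvv}$ is nonzero since $I(p)$ is the ideal of a linear subspace of codimension $d+r_i$ in the polynomial ring $\kvv$ (local cohomology of a polynomial ring with support in a linear subspace is concentrated in, and nonzero in, the codimension). The homological degree we need from the poset is then $\cod(p)-(d+r_i-1)-1 = 0$, so we must show $\Htil{0}(\svg(>\!p);\kk)\neq 0$, i.e. that the order complex of $\svg(>\!p)$ is disconnected.

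The key step, and the one I expect to be the main obstacle, is precisely this disconnectedness of $\svg(>\!p)$ for $p$ a \emph{minimal} reflecting subspace. Using the isomorphism $\svg(>\!(1\otimes 1)(W))\cong \svgw(>\!(1\otimes 1)(W))$ noted in Section~\ref{Prelims}, and Lemma~\ref{rvg} (the interval $\svgw(\leq\!(1\otimes 1)(W))\cong\rvg$ restricted appropriately), one translates the question into a statement about the poset $\rvg$ of reflecting subspaces: $p=W$ corresponds to a minimal element, and minimality means $W$ is not an intersection of reflecting subspaces of smaller codimension. The elements of $\svg(>\!p)$ strictly above $p$ correspond to reflecting subspaces properly containing $W$ of smaller codimension (together with translates), and the claim is that these fall into at least two classes with no chain connecting them through a common upper bound inside the interval — this is where minimality is used, since any such connection would express $W$ as an intersection of larger reflecting subspaces. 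I would make this precise by observing that if $W$ is minimal, the poset $\svg(>\!p)$ has at least two maximal elements and no element of $\svg$ strictly above $p$ sits below all of them simultaneously within the interval, forcing the order complex to break into at least two components; hence $\Htil{0}(\svg(>\!p);\kk)\neq 0$.

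Finally I would address why this nonzero summand survives to the associated graded of $\HH{d+r_i-1}{\I(\svg)}{\kvv}$: since the direct sum decomposition in Theorem~\ref{lc_arrangements}(b) is a decomposition of the \emph{associated graded} module (after the spectral sequence degenerates at $E_2$), a single nonzero graded piece on the right forces the local cohomology module on the left to be nonzero. There is no cancellation to worry about — the formula already gives $\gr$ of the local cohomology as that direct sum — so exhibiting one nonzero term completes the proof that $\HH{d+r_i-1}{\I(\svg)}{\kvv}\neq 0$. Taking $r$ to be the maximal such $r_i$ and applying the \v{C}ech bound via Proposition~\ref{SepDefn} then yields the ``in particular'' statement that every separating set has size at least $d+r-1$.
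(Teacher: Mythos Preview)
Your overall strategy matches the paper's: choose $p=(1\otimes 1)(W)$ for a minimal $r$-reflecting subspace $W$, compute $\cod(p)=d+r$, reduce to showing $\Htil{0}(\svg(>\!p);\kk)\neq 0$, and conclude via Theorem~\ref{lc_arrangements}(b) and the \v{C}ech bound through Proposition~\ref{SepDefn}. The setup, the codimension computation, and the final deduction are all correct.

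The gap is in your argument for disconnectedness of $\svg(>\!p)$. Your proposed formalization --- that $\svg(>\!p)$ has at least two maximal elements and ``no element of $\svg$ strictly above $p$ sits below all of them simultaneously'' --- does \emph{not} force the order complex to be disconnected. For instance, three maximal elements $a,b,c$ with intermediate elements $x<a,b$ and $y<b,c$ satisfy your condition (nothing lies below all three) yet form a connected poset. So the stated criterion is insufficient, and your reference to Lemma~\ref{rvg} does not supply what is needed here.

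The paper closes this gap by invoking Lemma~\ref{connect_reflect}. After the identification $\svg(>\!(1\otimes 1)(W))\cong\svgw(>\!(1\otimes 1)(V^{G_W}))$, connectedness of this open interval is equivalent to $\svgw$ being connected in codimension $<\!r$, which by Lemma~\ref{connect_reflect} (applied to $G_W$) is equivalent to $G_W$ being generated by $(<\!r)$-reflections. Minimality of $W$ rules this out: if $G_W=\langle g_1,\dots,g_s\rangle$ with each $g_i$ a $(<\!r)$-reflection, then $W=\bigcap_i V^{\langle g_i\rangle}$ exhibits $W$ as an intersection of larger reflecting subspaces. This is exactly the mechanism behind your correct intuition that ``any such connection would express $W$ as an intersection of larger reflecting subspaces''; what your proof is missing is the reduction to Lemma~\ref{connect_reflect} that makes that intuition rigorous.
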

\begin{proof} 
Let $W \subset V$ be a minimal $r$-reflecting subspace in the sense of Subsection~\ref{r-ref}. Note that
 \[\svggr{1}{W}\cong \svgw(>\!(1 \otimes 1)(V^{G_W}))\,.\]
  The latter poset is connected if and only if $\svgw$ is connected in codimension $<\!r$. By Lemma~\ref{connect_reflect}, this is the case if and only if $G_W$ is generated by $(<\!r)$-reflections. Since $W$ is minimal, $G_W$ is not generated by $(<\!r)$-reflections: if $G_W=\langle g_1 , \dots , g_s \rangle$ with each $g_i$ an $(<\!r)$-reflection, one may write $W = \bigcap_{i=1}^s V^{\langle g_i \rangle}$, expressing $W$ as the intersection of larger reflecting subspaces. Thus, \[\Htil{0}\big(\svggr{1}{W};\kk \big)\neq 0\,.\]
 Theorem~\ref{lc_arrangements}~(b) applies to show that $\HH{d+r-1}{\I(\svg)}{\kvv}\neq 0$. Thus, $\I(\svg)$ cannot be set-theoretically defined by $d+r-1$ or fewer equations, and by Proposition~\ref{SepDefn}, any separating set has size at least $d+r-1$.
\end{proof}

\begin{cor}\label{emb_dim} If $r$ is the maximal codimension of a minimal reflecting subspace, then the embedding dimension of $\kvg$ is at least $d+r-1$.
\end{cor}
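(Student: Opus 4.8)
The plan is to deduce the statement directly from Theorem~\ref{nonvanishing} together with the \v{C}ech-complex observation recorded at the end of Subsection on local cohomology. First I would recall that the embedding dimension of a graded (or local) ring can be characterized as the minimal number of elements needed to generate its defining ideal — more precisely, if $\kvg$ is presented as $\kk[x_1,\dots,x_n]/J$ with $n$ minimal, then $n$ equals the embedding dimension; equivalently, the embedding dimension is the least $t$ such that there is a surjection from a polynomial ring in $t$ variables onto $\kvg$. The connection to the separating variety is the standard one: $\kvv = \kvg \otimes_{\kk} \kvg$ presents $\svg$, and a surjection $\kk[x_1,\dots,x_t] \twoheadrightarrow \kvg$ yields generators $f_1,\dots,f_t$ of $\kvg$ as an algebra, hence $f_1 \otimes 1 - 1 \otimes f_1, \dots, f_t \otimes 1 - 1 \otimes f_t$ generate $\I(\svg)$ up to radical by Proposition~\ref{SepDefn} (these $f_i$ form a separating set). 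So it suffices to bound the number of such algebra generators from below.

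The key step is then to invoke Theorem~\ref{nonvanishing}: with $r$ the maximal codimension of a minimal reflecting subspace, $\HH{d+r-1}{\I(\svg)}{\kvv} \neq 0$. By the \v{C}ech complex description of local cohomology (Theorem~\ref{lc_arrangements}~(a) with all $I_i$ principal, or the explicit remark in the Preliminaries), if $\I(\svg)$ is generated up to radical by $t$ elements, then $\HH{i}{\I(\svg)}{\kvv}$ vanishes for $i > t$, since the \v{C}ech complex on $t$ elements has length $t$. Therefore $t \gs d+r-1$. Combining this with the previous paragraph — any algebra generating set $f_1,\dots,f_t$ of $\kvg$ produces a set of $t$ elements cutting out $\I(\svg)$ up to radical — we conclude that $\kvg$ cannot be generated by fewer than $d+r-1$ elements as a $\kk$-algebra, which is exactly the statement that its embedding dimension is at least $d+r-1$.

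I do not expect a serious obstacle here, since the corollary is essentially a restatement of Theorem~\ref{nonvanishing} through the dictionary ``minimal number of algebra generators of $\kvg$'' $=$ ``minimal number of set-theoretic defining equations of $\svg$'' $=$ ``minimal cardinality of a separating set.'' The one point requiring a line of care is the passage from a minimal algebra generating set of $\kvg$ to a separating set of the same size: one must note that \emph{any} algebra generating set of $\kvg$ is a separating set (it generates the full ring of invariants, which trivially separates orbits), and that the minimal size of an algebra generating set is precisely the embedding dimension in the graded setting (using that the grading is connected, so Nakayama applies to the maximal graded ideal). With that in place the chain of inequalities is immediate, and no further computation is needed.
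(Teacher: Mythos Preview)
Your proposal is correct and follows essentially the same approach as the paper: a minimal algebra generating set of $\kvg$ is a separating set, so Theorem~\ref{nonvanishing} immediately gives the bound $d+r-1$ on the embedding dimension. The paper says this in one line (and mentions an alternative via a result of Kemper on embedding codimension under isotropy subgroups), whereas you unnecessarily re-derive the \v{C}ech-complex step already contained in Theorem~\ref{nonvanishing}; note also the harmless slip $\kvv = \kvg \otimes_{\kk} \kvg$ --- in the paper's notation $\kvv = \kv \otimes_{\kk} \kv$.
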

\begin{proof}
This follows immediately from Theorem~\ref{nonvanishing} since a minimal generating set is a separating set. Alternatively, one may argue by using Proposition~\ref{weaker_bound} to conclude that the embedding codimension is at least $r$ if $G$ is not a $(<\! r)$-reflection group, and applying \cite[Theorem~A]{Kem}, according to which the embedding codimension (referred to in \emph{ibid.} as the polynomial defect) does not increase when passing to the invariants of an isotropy subgroup.
\end{proof}

\begin{rmk} In a recent work of Reimers \cite[Theorem~2.4]{Rei}, the statement of Lemma~\ref{connect_reflect} is established in the more general setting where $G$ acts on a variety that is connected in codimension $\ls\! r$. This result is then applied to study the depth of schemes defining the separating variety of the action --- particularly,  in terms of local cohomology, the least $i$ for which $\HH{i}{\fm}{R/J}\neq 0$ for some $J$ with $\sqrt{J}=\I(\svg)$. In characteristic $p>0$, the vanishing of these local cohomology modules is related to the vanishing of those considered above by Peskine and Szpiro's vanishing theorem \cite[Remarque~p.~110]{PS}.
\end{rmk}

\begin{rmk} It follows from the Hartshorne-Lichtenbaum vanishing theorem \cite[Theorem~3.1]{Har} that $\HH{2d}{\I(\svg)}{\kvv}=0$. This can also be deduced from Theorem~\ref{lc_arrangements}. Indeed, the only potential element of the poset $\svg$ of codimension $2d$ is ${(1\otimes 1)(V^G)}$, and this occurs only if $V^G$ is the origin. As $\svggr{1}{V^G}$ is non-empty, $\Htil{-1}(\svggr{1}{V^G};\kk)=0$, and we are done.
\end{rmk}

%%%%%%%%%%%%%%%%%%%%%%%%%%%%%%%%%%%%%%%%%%%%%%%%%%%%
% Section: Rigid Reflection groups
%%%%%%%%%%%%%%%%%%%%%%%%%%%%%%%%%%%%%%%%%%%%%%%%%%%%

\section{Rigid Reflection Groups}\label{Ref_Groups}

In this section, we focus on rigid reflection groups. In this situation, every minimal reflecting subspace is a hyperplane, and in particular, the arrangement of reflecting subspaces $\rvg$ is a hyperplane arrangement. Recall that a simplicial complex is \emph{pure} if each of its maximal facets have the same dimension. A pure simplicial complex is \emph{shellable} if there is a linear ordering of its maximal facets (a \emph{shelling}) $F_1, F_2 , \dots , F_t$ such that $F_i \cap \bigcup_{j<i} F_j$ is pure of codimension $1$; we call a poset \emph{shellable} if its order complex is pure and shellable. The salient fact we use is the following well-known tool in combinatorial topology; see, e.g., \cite[Subsection~3.1]{Wac}.

\begin{prop}\label{shellableCM}
The only non-vanishing homology of a shellable poset is in the dimension of the poset.
\end{prop}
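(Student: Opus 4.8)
The plan is to deduce this from the classical topological fact that a pure shellable complex is homotopy equivalent to a wedge of top-dimensional spheres. Since the homology of a poset $P$ is, by definition, the reduced simplicial homology $\Htil{\bullet}(\Delta;\kk)$ of its order complex $\Delta$, and $\dim P$ is $\dim\Delta$, it suffices to prove: if $\Delta$ is a pure $n$-dimensional simplicial complex admitting a shelling $F_1,\dots,F_t$, then $\Htil{k}(\Delta;\kk)=0$ for all $k\neq n$. (The cases $n\le 0$ are immediate, so assume $n\ge 1$.) I would argue by induction on $t$, setting $\Delta_i:=F_1\cup\cdots\cup F_i$. Because $\Delta$ is pure, each $F_j$ is an $n$-simplex, so each $\Delta_i$ is again pure of dimension $n$, has facets exactly $F_1,\dots,F_i$, and inherits the shelling $F_1,\dots,F_i$; thus the inductive hypothesis will supply $\Htil{k}(\Delta_{i-1};\kk)=0$ for $k\neq n$. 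The base case $\Delta_1=F_1$ is a simplex, hence contractible.

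For the inductive step, set $A_i:=\Delta_{i-1}\cap F_i$. Since $F_i$ is not a face of $\Delta_{i-1}$, $A_i$ is a subcomplex of the boundary sphere $\partial F_i\cong S^{n-1}$, and the shelling condition says $A_i$ is pure of dimension $n-1$; hence $A_i$ is a union of a nonempty collection of the $n+1$ facets of $\partial F_i$. I split into two cases. If $A_i=\partial F_i$, I would use the long exact sequence of the pair $(\Delta_i,\Delta_{i-1})$ together with the excision isomorphism $H_k(\Delta_i,\Delta_{i-1})\cong H_k(F_i,\partial F_i)\cong\Htil{k}(S^n;\kk)$, which is $\kk$ for $k=n$ and $0$ otherwise; feeding in the inductive vanishing $\Htil{n-1}(\Delta_{i-1};\kk)=0$ then forces $\Htil{k}(\Delta_i;\kk)=0$ for all $k\neq n$ (with $\dim_\kk H_n$ increasing by one). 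If $A_i\subsetneq\partial F_i$, then the facets of $A_i$ form a proper nonempty subset of the facets of $\partial F_i$; omitting one of them, namely the face of $F_i$ opposite some vertex $v$, every remaining facet of $\partial F_i$ contains $v$, so $A_i$ is a cone with apex $v$ and hence contractible. Then both $F_i$ and $A_i$ are acyclic, so $H_k(\Delta_i,\Delta_{i-1})\cong H_k(F_i,A_i)=0$ for every $k$, and the long exact sequence of the pair yields $\Htil{k}(\Delta_i;\kk)\cong\Htil{k}(\Delta_{i-1};\kk)$, to which the inductive hypothesis applies directly. This completes the induction, and taking $i=t$ proves the claim.

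The content of the argument is concentrated in the case $A_i\subsetneq\partial F_i$: the only genuinely geometric observation is that a proper, pure, codimension-one subcomplex of the boundary of a simplex is a cone, hence contractible — everything else is bookkeeping with the long exact sequence of a pair and the chain-level excision identity $C_\bullet(\Delta_i)/C_\bullet(\Delta_{i-1})\cong C_\bullet(F_i)/C_\bullet(A_i)$. The remaining care points are ensuring that each $\Delta_i$ stays pure of dimension $n$ so that the inductive hypothesis is available in every degree, and handling the low-degree edge cases (for $n=1$ the vanishing $\Htil{0}(\Delta_{i-1};\kk)=0$ is exactly the assertion that $\Delta_{i-1}$ is connected, which is part of the inductive hypothesis). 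I do not expect any real obstacle beyond this; the result is standard, and a reference such as \cite[Subsection~3.1]{Wac} could also be cited in place of a full proof.
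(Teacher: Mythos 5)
Your proof is correct. The paper does not actually prove this proposition --- it is stated as a well-known fact with a pointer to \cite[Subsection~3.1]{Wac} --- and your inductive argument (peeling off facets in the shelling order, using the chain-level excision $C_\bullet(\Delta_i)/C_\bullet(\Delta_{i-1})\cong C_\bullet(F_i)/C_\bullet(A_i)$, and observing that a proper pure codimension-one subcomplex of $\partial F_i$ is a cone) is exactly the standard proof underlying that citation. The only point worth making explicit is that the shelling condition is read as requiring $F_i\cap\bigcup_{j<i}F_j$ to be \emph{nonempty} and pure of codimension one for $i\ge 2$, which you use implicitly when you say $A_i$ is a union of a nonempty collection of facets of $\partial F_i$; with that convention everything checks out.
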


We refer to \cite[Subsection~3.2]{Wac} for the notions  and facts from poset topology used in the proof of the following lemma. This lemma is undoubtedly previously known, but we were unable to find it in the literature in the form needed for the subsequent theorem.

\begin{lem}\label{any_facet} If $G$ acts on $V$ as a rigid reflection group, and $H$ is a reflecting hyperplane, then there exists a shelling of $\rvg$ starting with a facet containing $H$.
\end{lem}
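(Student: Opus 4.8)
The goal is to produce a shelling of the order complex of $\rvg$ (for $G$ a rigid reflection group) whose \emph{first} facet is a maximal chain through a prescribed reflecting hyperplane $H$. Since $G$ is a rigid reflection group, every minimal reflecting subspace is a hyperplane, so $\rvg$ is genuinely a hyperplane arrangement: its intersection poset is the usual geometric lattice $L$ of the arrangement (recall the order convention here is dual to the customary one, so ``maximal chains'' are what one usually calls maximal chains in $L$, and the dimension of $\rvg$ is $d-1$ minus the dimension of $V^G$, i.e. the rank of the arrangement minus one). The facets of the order complex are exactly the maximal chains of $L$. The plan is to invoke the well-developed theory of \emph{EL-shellability} (lexicographic shellability) for geometric lattices: every geometric lattice admits an EL-labeling, and more to the point, for any chosen atom (here, a reflecting hyperplane $H$) one can arrange the edge labeling so that the lexicographically first maximal chain passes through $H$. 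Concretely, I would fix a linear order on the atoms of $L$ placing $H$ first, and use the standard EL-labeling of a geometric lattice induced by this atom order (label a covering relation $x \lessdot y$ by the least atom $a$ with $x \vee a = y$); the resulting lexicographic order on maximal chains is a shelling order, and its first element is the chain obtained by greedily joining $H$, then the next available atom, and so on — a maximal chain containing $H$.

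\textbf{Key steps, in order.} First, translate: identify $\rvg$ with the intersection lattice $L$ of the reflection arrangement of $G$, noting the dual order convention, so that facets of the order complex $\Delta(\rvg)$ are maximal chains of $L$ and $\Delta(\rvg)$ is pure of the expected dimension because $L$ is graded (a geometric lattice). Second, fix a total order on the atoms (hyperplanes) of $L$ with $H$ in first position, and define the standard atom-order EL-labeling $\lambda$ of $L$. Third, quote (or briefly recall the proof of) the theorem that every geometric lattice is EL-shellable via such a labeling — this is classical, due to Björner, and is exactly the content referenced in \cite[Subsection~3.2]{Wac}; conclude that the lexicographic order on maximal chains induced by $\lambda$ is a shelling order of $\Delta(L)$. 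Fourth, identify the lex-first maximal chain: because $H$ is the least atom, the increasing chain built by always adjoining the least atom that strictly increases the join begins $\hat 0 \lessdot H \lessdot \cdots$, so the very first facet in the shelling contains $H$. Finally, pull this back through the order-reversing identification of $L$ with $\rvg$ to get the claimed shelling of $\rvg$ whose initial facet contains $H$.

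\textbf{Main obstacle.} The substantive point is not that $\rvg$ is shellable — that is standard — but that the shelling can be \emph{started} at a facet through an arbitrary prescribed hyperplane. The cleanest route is to observe that EL-shellability gives much more than bare shellability: any linear extension of the lexicographic order on maximal chains is a shelling, and one has complete freedom in choosing the atom order that drives the labeling. Thus the real work is (i) checking carefully that reordering the atoms to put $H$ first still yields a valid EL-labeling of the geometric lattice (it does — the argument that the atom-order labeling is EL only uses that it is \emph{some} linear order on atoms, via the exchange property of the matroid), and (ii) verifying that with $H$ minimal the lex-least maximal chain indeed passes through $H$ rather than skipping it. For (ii): in the lex-least chain each step uses the smallest label available, and the first step from $\hat 0$ has labels indexed by the atoms below the cover; the globally smallest such is $H$ itself (as $\hat 0 \lessdot H$ is a cover and $H$ is the least atom), so the chain starts $\hat 0 \lessdot H$. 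A secondary, purely bookkeeping nuisance is keeping the dual order convention straight throughout so that ``starts with'' in $\rvg$ corresponds correctly to ``lex-first'' in $L$; I would handle this once at the outset and then work entirely in $L$.
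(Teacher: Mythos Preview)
Your proposal is correct and follows essentially the same route as the paper: pass to the dual of $\rvg$, which is the geometric lattice of the reflection arrangement, place $H$ first in a linear order on the atoms, use the standard atom-order EL-labeling (label a cover $x\lessdot y$ by the least atom whose join with $x$ is $y$), and observe that the lexicographic order on maximal chains is a shelling whose first chain passes through $H$. The paper's proof is just a terser version of exactly this argument.
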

\begin{proof} 
Note first that it is equivalent to find such a shelling of the dual $\rvg^*$ of $\rvg$.
Since $\rvg^*$ is the standard poset of a hyperplane arrangement, it is a geometric lattice, whose atoms are the reflecting hyperplanes. For any ordering of these atoms $H=H_1, H_2, \dots , H_t$, label each edge of the Hasse diagram, $(x,y)$, where $y$ covers $x$, with the least integer $i$ such that the join of $x$ and $H_i$ is $y$. This is an EL-labelling, so the associated lexicographic ordering on the maximal chains is a shelling, and the first facet of this shelling contains $H$.
\end{proof}

\begin{thm}\label{shell_rigid} If $G$ acts on $V$ as a rigid reflection group, then the intersection poset of $\svg$ is shellable.
\end{thm}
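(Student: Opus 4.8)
The plan is to build a shelling of the full intersection poset $\svg$ out of the shellings of the intervals $\svgle{\sigma}{V}$, each of which we already know (via Lemma~\ref{rvg} and Lemma~\ref{any_facet}) is isomorphic to the shellable poset $\rvg$. The key structural observations are: first, by Lemma~\ref{SV}~(a) the components of $\svg$ are exactly the subspaces $(1\otimes\sigma)(V)$ for $\sigma\in G$, so $\svg$ is a pure $d$-dimensional poset (every component has dimension $d$), and every element of $\svg$ lies below at least one component; second, Lemma~\ref{rvg} identifies the down-set of any single component $(1\otimes\sigma)(V)$ with $\rvg$, which under the rigid-reflection hypothesis is a geometric lattice and hence shellable by Lemma~\ref{any_facet}. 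So $\svg$ is a union of $|G|$ copies of a shellable lattice, glued along lower intervals.

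First I would fix a shelling of $\rvg$ once and for all, and order the group elements as $\sigma_1,\dots,\sigma_{|G|}$ in some way to be chosen. The maximal facets of the order complex of $\svg$ correspond to maximal chains, each of which is contained in the down-set of a unique component $(1\otimes\sigma_k)(V)$ (uniqueness because the top of the chain is a component). So I would shell $\svg$ by concatenating: first list all maximal chains in the down-set of $(1\otimes\sigma_1)(V)$ in the order given by a shelling of $\rvg\cong\svgle{\sigma_1}{V}$, then those of $(1\otimes\sigma_2)(V)$, and so on. The shelling condition within each block is exactly the shellability of $\rvg$. The content is the condition at the junctions: when we start the block for $(1\otimes\sigma_k)(V)$, we need that the intersection of its down-set with $\bigcup_{j<k}(\text{down-set of }(1\otimes\sigma_j)(V))$ is pure of codimension one, i.e. is exactly a union of down-sets of hyperplane-type (codimension one in the component) elements, and moreover that the very first facet we list in the $k$th block contains one such codimension-one face lying in the previous blocks.

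Here is where the intersection structure from Lemma~\ref{SV}~(b) does the work: $(1\otimes\sigma_k)(V)\cap(1\otimes\sigma_j)(V)=(1\otimes\sigma_k)(V^{\sigma_k^{-1}\sigma_j})$, and $V^{\sigma_k^{-1}\sigma_j}$ is a reflecting subspace, which under rigidity is an intersection of reflecting hyperplanes — hence the overlap of the two down-sets, viewed inside the copy $\rvg\cong\svgle{\sigma_k}{V}$, is exactly the lower interval below that reflecting subspace, a subposet generated by atoms (reflecting hyperplanes). Taking the union over all $j<k$, the part of the $k$th down-set already covered is the up-closure within $\rvg^*$ of a set of atoms, which is automatically pure of codimension one (it is a union of atom-intervals). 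To get the last required condition — that the first facet of the $k$th block meets a previously covered codimension-one face — I would choose the ordering $\sigma_1,\dots,\sigma_{|G|}$ so that the components are added in an order compatible with the connectedness-in-codimension-one of $\svg$ (which holds since a rigid reflection group is in particular a reflection group, via Lemma~\ref{connect_reflect}): concretely, order them so that each $\sigma_k$ with $k\gs 2$ satisfies $\sigma_k^{-1}\sigma_j$ is a pseudoreflection for some $j<k$. Such an ordering exists precisely because $G$ is generated by pseudoreflections. Then the hyperplane $H=V^{\sigma_k^{-1}\sigma_j}$ gives a reflecting hyperplane in the copy $\rvg\cong\svgle{\sigma_k}{V}$ that already lies in block $j$, and Lemma~\ref{any_facet} lets us start the $k$th block with a facet containing $H$.

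I expect the main obstacle to be a clean verification that the union over $j<k$ of the overlap posets is genuinely pure of codimension one inside $\svgle{\sigma_k}{V}$, rather than containing some lower-dimensional ``extra'' pieces — i.e. ruling out the possibility that $(1\otimes\sigma_k)(V)$ meets the earlier components only along subspaces of codimension $\gs 2$. But this is exactly what the compatible ordering plus connectedness-in-codimension-one guarantees: for each $k$ there is at least one $j<k$ giving a codimension-one overlap, and the full overlap, being a union of atom-intervals in the geometric lattice $\rvg^*$, can have no facet of codimension greater than one. A secondary nuisance is bookkeeping the passage between $\rvg$ and its dual $\rvg^*$ consistently with the ``order dual to the usual one'' convention flagged after Lemma~\ref{rvg}; I would state everything in terms of $\rvg^*$ (the standard geometric lattice) to avoid sign-of-ordering confusion, exactly as in the proof of Lemma~\ref{any_facet}.
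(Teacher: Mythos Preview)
Your proposal is correct and follows essentially the same strategy as the paper's proof: order the group elements so that each $\sigma_k$ differs from some earlier $\sigma_j$ by a pseudoreflection (using that $G$ is generated by reflections), then concatenate shellings of the intervals $\svgleq{\sigma_k}{V}\cong\rvg$, invoking Lemma~\ref{any_facet} at each step to begin the $k$th block with a facet containing the hyperplane $(1\otimes\sigma_k)(V^{\sigma_j^{-1}\sigma_k})$ already lying in a previous block. The one small slip is your phrase ``fix a shelling of $\rvg$ once and for all'': as you yourself note later, the shelling of each block must be chosen (via Lemma~\ref{any_facet}) to start at the appropriate hyperplane, so it varies with $k$ --- which is exactly what the paper does.
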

\begin{proof} Order the elements of $G$ 
\[1=\sigma_0 \ , \ \sigma_1 \ , \ \dots \ , \ \sigma_{|G|-1} \]
so that for each $j > 0$ there is some $ i <j $ such that $\sigma_i^{-1}\sigma_j$ is a reflection. We then construct a shelling inductively as follows.

First, by the identification $\svgleq{1}{W}\iso \rvg$ from Lemma~\ref{rvg}, list the facets in a shelling of $\svgleq{1}{V}$. Then, for $j>0$, given a list of the facets in 
\[\bigcup_{j' < j} \svgleq{\sigma_{j'}}{V}\]
such that each subsequent facet intersects the union of the others in pure codimension 1, choose an $i <j$ such that $\sigma_i^{-1}\sigma_j$ is a reflection. By Lemmas~\ref{rvg} and~\ref{any_facet}, list the facets in a shelling of $\svgleq{\sigma_j}{V}$ that starts with a facet $F_j$ containing a facet of
\[\svgleq{\sigma_i}{V} \cap \svgleq{\sigma_j}{V} \ = \  \svgleq{ \sigma_j}{V^{\sigma_i^{-1}\sigma_j}}\,.\]
As this is a codimension 1 subposet of ${\bigcup_{j' < j} \svgleq{\sigma_{j'}}{V}}$, the facet $F_j$ intersects the union of previously listed faces in codimension 1. Continue with the list of facets in the chosen shelling of ${\svgleq{\sigma_j}{V}}$. 

Iterating this procedure for all $j=0,\dots,|G|-1$ produces a shelling of $\svg$.
\end{proof}

As a consequence, we find that our method from Theorem~\ref{nonvanishing} does not provide sharper bounds for rigid reflection groups.

\begin{cor} If $G$ acts on $V$ as a $d$-dimensional rigid reflection group, then $\HH{t}{\I(\svg)}{\kvv}=0$ for all $t\neq d$.
\end{cor}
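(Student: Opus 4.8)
The plan is to combine Theorem~\ref{shell_rigid} with the Goresky--MacPherson-type formula of Theorem~\ref{lc_arrangements}~(b), using Proposition~\ref{shellableCM} to kill almost all of the terms. Concretely, write $P$ for the intersection poset of $\svg$. By Theorem~\ref{lc_arrangements}~(b), for each $t\gs 0$ the module $\HH{t}{\I(\svg)}{\kvv}$ has an associated graded of the form $\bigoplus_{p\in P}\big[\HH{\cod(p)}{I(p)}{\kvv}\otimes_\kk\Htil{\cod(p)-t-1}(P(>\!p);\kk)\big]$, so it suffices to show that for $t\neq d$ every summand vanishes, i.e.\ that for each $p\in P$ either $\HH{\cod(p)}{I(p)}{\kvv}=0$ or $\Htil{\cod(p)-t-1}(P(>\!p);\kk)=0$.

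First I would record what the elements $p\in P$ look like. By Lemma~\ref{SV}~(b), every non-empty intersection of components of $\svg$ has the form $(1\otimes\gamma)(V^H)$ for an isotropy subgroup $H\ls G$ and $\gamma\in G/H$; such a subspace is linear of dimension $\dim V^H$, so its codimension in $V\times V=\mathbb A^{2d}$ is $\cod(p)=2d-\dim V^H$. Since $H$ is an isotropy subgroup and $G$ is a rigid reflection group, $H$ is itself a reflection group acting on $V$, hence $\dim V^H\gs 0$ and in fact $\dim V^H$ can be any value from $0$ up to $d$ (the value $d$ occurring for $p=(1\otimes\sigma)(V)$, a top-dimensional component). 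Thus $\cod(p)$ ranges over $d\ls\cod(p)\ls 2d$. In particular $\cod(p)\gs d$ always.

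Next, the key local-cohomology input: for a linear subspace $p\subseteq\mathbb A^{2d}$, the only non-vanishing local cohomology $\HH{i}{I(p)}{\kvv}$ occurs at $i=\cod(p)$ (local cohomology of a polynomial ring with support in a coordinate-like linear prime is concentrated in the single cohomological degree equal to the codimension). So in the formula the factor $\HH{\cod(p)}{I(p)}{\kvv}$ is always non-zero, and the vanishing must come from the homology of the interval: we need $\Htil{\cod(p)-t-1}(P(>\!p);\kk)=0$ for all $t\neq d$ and all $p\in P$. Now by Theorem~\ref{shell_rigid}, $P$ is shellable; moreover shellability passes to intervals $P(>\!p)$ (an interval of a shellable poset of this type is again shellable --- this is standard for geometric-lattice-like posets, and also follows from the argument of Lemma~\ref{any_facet} applied inside $\svgw$ for the relevant isotropy subgroup, since $P(>\!p)\iso\svgw(>\!\cdot)$ is the interval above a point in the separating variety of an isotropy subgroup, again a rigid reflection group). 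Hence by Proposition~\ref{shellableCM} the only non-vanishing homology of $P(>\!p)$ sits in degree $\dim P(>\!p)$. One then computes this dimension: a maximal chain in $P(>\!p)$ runs from $p$ up to a top component $(1\otimes\sigma)(V)$, and each covering step drops codimension by exactly $1$ (consecutive elements in a maximal chain of the reflecting-subspace poset differ in codimension by $1$, since a rigid reflection group's reflecting subspaces form a hyperplane-arrangement intersection lattice), so the chain has length $\cod(p)-d$ and $\dim P(>\!p)=\cod(p)-d-1$. Therefore $\Htil{\cod(p)-t-1}(P(>\!p);\kk)$ is non-zero only when $\cod(p)-t-1=\cod(p)-d-1$, i.e.\ $t=d$. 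This forces every summand with $t\neq d$ to vanish, giving $\HH{t}{\I(\svg)}{\kvv}=0$ for $t\neq d$.

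The main obstacle is verifying cleanly that the intervals $P(>\!p)$ are shellable of the claimed dimension, rather than just $P$ itself; I would handle this by the identification $P(>\!p)\iso\svgw(>\!(1\otimes 1)(V^{G_W}))$ for the appropriate isotropy subgroup $H=G_W$ (so that $\svgw$ is again a rigid reflection group, since isotropy subgroups of rigid reflection groups are rigid reflection groups), invoking Theorem~\ref{shell_rigid} for $H$ and the fact that an open interval above a point in a shellable poset of this form is shellable, together with the rank function of the geometric lattice $\rvg^*$ to pin down that covers change codimension by $1$. A secondary, more routine point is the statement that $\HH{i}{I(p)}{\kvv}$ is concentrated in degree $\cod(p)$ for $p$ a linear subspace, which is immediate after a linear change of coordinates since $I(p)$ becomes generated by a regular sequence of $\cod(p)$ variables.
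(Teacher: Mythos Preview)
Your proposal is correct and follows essentially the same route as the paper: apply Theorem~\ref{lc_arrangements}(b), identify each interval $P(>\!p)$ with $\SVG{V}{G_W}(>\!(1\otimes 1)(V^{G_W}))$ for the relevant isotropy subgroup $G_W$ (which is again a rigid reflection group), invoke Theorem~\ref{shell_rigid} for $G_W$, and use Proposition~\ref{shellableCM} to concentrate the homology in the top degree. Your explicit computation that $\dim P(>\!p)=\cod(p)-d-1$ is in fact clearer than the paper's statement, which uses ``$\cod$'' for codimension in $V$ in one line and in $V^2$ in the next; your version makes the arithmetic leading to $t=d$ transparent.
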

\begin{proof} Since $G$ is a rigid reflection group, $G_W$ is a reflection group for each isotropy subgroup $G_W$. Then, by Theorem~\ref{shell_rigid} and Proposition~\ref{shellableCM}, we find that \[{\Htil{i}(\svggr{1}{V^{G_W}}; \kk)=0} \quad \text{for all} \quad i \neq\cod(V^{G_W})-1\,.\] 
Since 
\[\svggr{1}{V^{G_W}} \iso \svggr{\tau}{V^{G_W}}\]
 for any $\tau$, by Lemma~\ref{ints_are_isotrops}, we have $\Htil{i}(\svg(>\!p); \kk)=0$ for all ${i\neq\cod(p)-1}$ and all $p$ in the intersection poset. The result follows by Theorem~\ref{lc_arrangements}.
\end{proof}

\begin{conj}\label{rigid_poly}
There exists a separating set of size $d$ (that is, there exists a polynomial separating algebra) if and only if $G$ is a rigid reflection group.
\end{conj}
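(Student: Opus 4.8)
This is a conjecture, not a theorem, so what follows is a sketch of a plan of attack rather than a proof — and I should be candid that the hard direction is genuinely open.

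The easy direction is immediate from the results already in hand: if there exists a separating set of size $d$, then by Proposition~\ref{weaker_bound} (or Theorem~\ref{nonvanishing} with $r=1$) applied to every isotropy subgroup $G_U$ — using that a separating set for $G$ restricts to give separating data for each isotropy subgroup, which is the content packaged into the main theorem of the introduction — every $G_U$ is a reflection group, i.e.\ $G$ is a rigid reflection group. Indeed, the main theorem stated in the introduction says precisely that a separating set of size $d+r-1$ forces every $G_U$ to be generated by $r$-reflections; taking $r=1$ gives the claim. So the entire content of the conjecture is the converse: \emph{every rigid reflection group admits a polynomial separating algebra}.

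For the hard direction, the plan is to produce, for an arbitrary rigid reflection group $G$ acting on $V$ with $\dim V = d$, an explicit set of $d$ invariants $f_1,\dots,f_d$ whose vanishing locus of $f_i\otimes 1 - 1\otimes f_i$ in $V\times V$ is exactly $\svg$. By Proposition~\ref{SepDefn} this is equivalent to cutting out $\I(\svg)$ set-theoretically by $d$ equations. The structural evidence that this should be possible is Theorem~\ref{shell_rigid} together with its corollary: the intersection poset of $\svg$ is shellable, hence (via Theorem~\ref{lc_arrangements}) $\HH{t}{\I(\svg)}{\kvv}=0$ for all $t\neq d$, so the cohomological obstruction to a set-theoretic complete intersection of the expected codimension vanishes. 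The natural strategy is the ``triangle trick'' of \cite[Section~5.2]{Duf2} referenced in the introduction: one builds the separating invariants hyperplane-arrangement-by-hyperplane-arrangement, using the rigidity hypothesis to guarantee that each isotropy subgroup, being an honest reflection group, contributes in a controlled (polynomial) way, and then patches the local pieces together along the shelling order of $\svg$ produced in Theorem~\ref{shell_rigid}. Roughly: process the components $(1\otimes\sigma)(V)$ in an order compatible with the shelling, at each stage adding or modifying invariants so that the partial vanishing locus equals the union of the components handled so far, using at each merge a product/combination trick that exploits the codimension-$1$ intersection guaranteed by shellability.

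The main obstacle — and the reason this remains a conjecture — is that vanishing of local cohomology in the wrong degrees is necessary but not sufficient for a set-theoretic complete intersection; there is no general mechanism converting the shellability of the poset into an actual bounded generating set up to radical. Concretely, the difficulty is global: even though each local piece (the invariants of a reflection group $G_U$) is polynomial, there is no evident way to glue the $d$ local coordinates across all of $G$ without the count creeping above $d$, and the ``triangle trick'' has so far only been carried out case-by-case (as the examples in Section~\ref{Examples} illustrate) rather than uniformly. A successful proof would likely need either a new structural input — perhaps a direct construction of a polynomial subalgebra of $\kvg$ of dimension $d$ on which $G$-orbits are separated, built from the reflection-group structure of the isotropy subgroups — or a strengthening of the Goresky–MacPherson/\`Alvarez–Garc\'ia–Zarzuela machinery to a statement about set-theoretic defining equations rather than merely local cohomology. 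We leave this as the open problem stated above; Section~\ref{Examples} provides substantial affirmative evidence across a wide range of rigid reflection groups.
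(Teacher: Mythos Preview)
The paper offers no proof of this statement --- it is explicitly posed as a conjecture, with no argument given beyond the surrounding context. You correctly recognise this and correctly observe that one direction (a separating set of size $d$ forces $G$ to be a rigid reflection group) is already established by Theorem~\ref{nonvanishing}: if some minimal reflecting subspace had codimension $r>1$, the theorem would force any separating set to have at least $d+r-1>d$ elements. The converse direction is genuinely open, and the paper does not claim otherwise; your honest assessment of the obstacles (vanishing of local cohomology is necessary but not sufficient for a set-theoretic complete intersection; the triangle trick is ad hoc rather than uniform) is accurate. There is nothing further to compare.

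One minor comment on your sketch of the easy direction: the phrase ``a separating set for $G$ restricts to give separating data for each isotropy subgroup'' is not quite how the paper argues, and as stated it would need justification. The cleaner route, which you also mention, is to invoke Theorem~\ref{nonvanishing} directly via the equivalence between ``rigid reflection group'' and ``every minimal reflecting subspace has codimension $1$.''
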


The following example shows that the bounds in Theorem~\ref{nonvanishing} are not necessarily sharp if $G$ is not a reflection group.

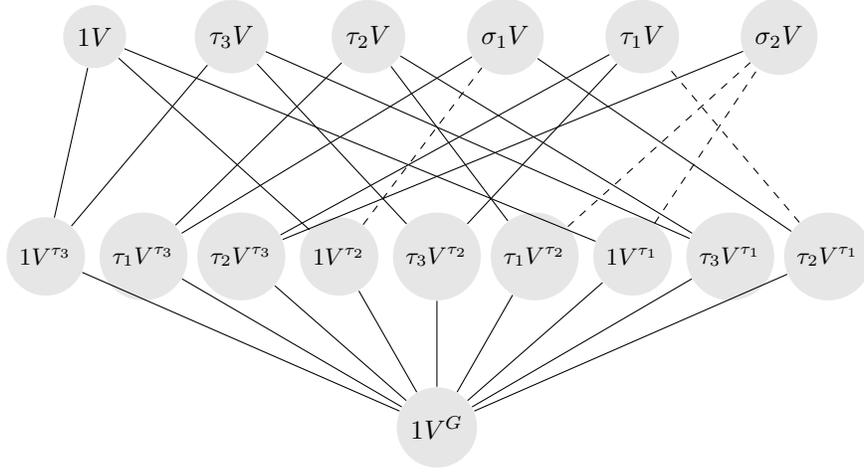
\begin{figure}
\begin{center}
\begin{tikzpicture}
  [scale=.65,auto=left,every node/.style={circle,fill=gray!20}]
  \node (na) at (3,6.5) {$ 1 V$};
  \node (nb) at (5.8,6.5)  {$\tau_3  V$};
  \node (nc) at (8.6,6.5)  {$\tau_2  V$};
  \node (nd) at (11.4,6.5) {$\sigma_1  V$};
  \node (ne) at (14.2,6.5)  {$\tau_1  V$};
  \node (nf) at (17,6.5)  {$\sigma_2 V$};
\node (n1) at (2, 2) {\small $1 V^{\tau_3}$};
\node (n2) at (4, 2) {\small $\tau_1 V^{\tau_3}$};
\node (n3) at (6, 2) {\small $\tau_2 V^{\tau_3}$};
\node (n4) at (8, 2) {\small $1 V^{\tau_2}$};
\node (n5) at (10, 2) {\small $\tau_3 V^{\tau_2}$};
\node (n6) at (12, 2) {\small $\tau_1 V^{\tau_2}$};
\node (n7) at (14, 2) {\small $1 V^{\tau_1}$};
\node (n8) at (16, 2) {\small $\tau_3 V^{\tau_1}$};
\node (n9) at (18, 2) {\small $\tau_2 V^{\tau_1}$};
\node (n0) at (10,-1.5) {$1 V^G$};

  \foreach \from/\to in {n1/na,n1/nb,n2/nd,n2/nc,n3/nf,n3/ne,
n4/na,n5/nb,n5/ne,n6/nc,
n7/na,n8/nb,n8/nc,n9/nd,
n1/n0,n2/n0,n3/n0,n4/n0,n5/n0,n6/n0,n7/n0,n8/n0,n9/n0}
    \draw (\from) -- (\to);

  \foreach \from/\to in {n4/nd,n6/nf,n7/nf,n9/ne}
    \draw (\from) -- (\to) [dashed];

\end{tikzpicture}
\end{center}
\caption{The intersection poset of the separating variety of the permutation representation of $\mathrm{S}_3$.}
\label{SVG_S3}
\end{figure}

\begin{eg} Let $G$ be the symmetric group on three letters, with elements 
\[1\ , \ (12)=\tau_3 \ , \ (13)=\tau_2 \ , \ (23)= \tau_1 \ , \ (132) = \sigma_1 \ , \ (123) = \sigma_2 \,.\]
Let $V$ be its standard three-dimensional permutation representation. Let ${W=V^{\oplus n}}$ with $G$ acting diagonally. The group $G$ acts on $V$ as a rigid reflection group, and its action on $W$ is as a rigid $n$-reflection group. Note that the intersection poset of $\SVG{W}{G}$ is isomorphic to that of $\svg$, since for any subgroup $H$ of $G$, one has $W^H=(V^H)^{\oplus n}$. This intersection poset is depicted in Figure~\ref{SVG_S3} where $gV$ is shorthand for $(1\otimes g)(V)$, and similarly for $g V^h$.

The complex of $\SVG{V}{G}(>\!(1\otimes 1)( V^G) )$ is a graph, namely the subgraph of Figure~\ref{SVG_S3} obtained by deleting the bottom vertex. Its homology may be computed by first contracting a maximal tree, depicted with solid lines; the resulting graph consists of the four dotted edges looped around a single point. We thus have
\[\Htil{1}(\SVG{W}{G}(>\!(1\otimes 1)(W^G));\kk)\iso\Htil{1}(\svg(>\!(1\otimes 1)(V^G));\kk) \iso \kk^4 \,.\]
By Theorem~\ref{lc_arrangements}, $\HH{5n-2}{\I(\SVG{W}{G})}{\kk [W^2]}\neq 0$, so, as in the argument of Theorem~\ref{nonvanishing}, we conclude that any separating set for $W$ has at least $5n-2$ elements. Note that the bound provided by Theorem~\ref{nonvanishing} for $W$ is $4n-1$.
\end{eg}

%%%%%%%%%%%%%%%%%%%%%%%%%%%%%%%%%%%%%%%%%%%%%%%%%%%%
% Section: Examples of separating sets of minimal size
%%%%%%%%%%%%%%%%%%%%%%%%%%%%%%%%%%%%%%%%%%%%%%%%%%%%

\section{Examples of separating sets of minimal size}\label{Examples}

Below, we present a variety of examples of separating sets that realize the lower bound in Theorem~\ref{nonvanishing}, thereby showing that the bound is sharp for these actions and that the found separating sets are of minimal size. First, we review an example from the first author's thesis:

\begin{prop}\cite[Proposition~5.2.2]{Duf2}\label{diag_cyc} Let $G=\langle \sigma \rangle$ be the cyclic group of order $m$, and suppose $\kk$ contain $\zeta$, a primitive $m^\text{th}$ root of unity. Let $G$ act diagonally on $\kv$ by the rule
\[ \sigma(x_i)=\zeta^{d_i} x_i\]
where $1=d_1 | d_2 | \cdots | d_n | m$. Then there is a separating set for $\kvg$ of order $2n-1$. 
\end{prop}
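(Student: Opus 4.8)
The plan is to construct an explicit separating set of size $2n-1$ using a variant of the ``triangle trick'' from \cite[Section~5.2]{Duf2}, and to verify the separating property by checking the ideal-theoretic criterion of Proposition~\ref{SepDefn} --- or, more geometrically, by showing that the proposed invariants cut out the graph $\Gamma_{V,G}$ set-theoretically. First I would record the structure of $\kvg$: since $G=\langle\sigma\rangle$ is diagonal with weights $d_1,\dots,d_n$ (all dividing $m$), a monomial $x_1^{a_1}\cdots x_n^{a_n}$ is invariant precisely when $\sum_i a_i d_i\equiv 0 \mdd{m}$. In particular each power $x_i^{m/d_i}$ is invariant, as are the products $x_i x_j^{(m-d_i)/d_j}$ whenever $d_j\mid (m-d_i)$; because $d_1=1$, the element $x_1^m$ and the ``bridges'' $x_i x_1^{m-d_i}$ are always available. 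The idea of the triangle trick is that with $n$ ``diagonal'' invariants $y_i := x_i^{m/d_i}$ and $n-1$ ``connecting'' invariants $z_i$ (for $i=2,\dots,n$) relating $x_i$ to $x_1$, one gets a set of $2n-1$ invariants that still separates orbits.

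Concretely, I would take $E=\{\,x_i^{m/d_i} : 1\ls i\ls n\,\}\cup\{\,x_1^{m-d_i}x_i + x_1 : 2\ls i \ls n\,\}$, or a minor modification thereof chosen so that each connecting term is genuinely invariant (using $d_i\mid m$ and $d_1=1$; one may need $x_1^{(m-d_i)/\gcd}\,x_i$-type exponents, and possibly to add $x_1$ itself or $x_1^{d_i}$ to break a residual symmetry --- the precise bookkeeping is where care is needed). The verification then splits into two parts. Given $v=(v_1,\dots,v_n)$ and $w=(w_1,\dots,w_n)$ with $h(v)=h(w)$ for all $h\in E$: from $v_i^{m/d_i}=w_i^{m/d_i}$ we get $w_i=\zeta^{d_i k_i} v_i$ for some integer $k_i$ (for each $i$ with $v_i\neq 0$; the cases with $v_i=0$ are handled separately and force $w_i=0$). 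The connecting invariants then pin down the $k_i$ relative to $k_1$: evaluating $x_1^{m-d_i}x_i+x_1$ at $v$ and $w$ and using $v_1\neq 0$ (the generic case) forces $\zeta^{d_1 k_1 (m-d_i)/d_1}\cdot$(stuff)$=\,$(stuff), i.e.\ $d_i k_i \equiv d_1 k_1 \equiv k_1 \mdd m$ after simplification, so that applying $\sigma^{-k_1}$ to $w$ returns $v$. Hence $v$ and $w$ lie in the same orbit, so by Proposition~\ref{SepDefn} (together with Lemma~\ref{SV}, which identifies $\svg$ with the graph of the action) $E$ is a separating set.

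The main obstacle I anticipate is the careful treatment of the coordinate subspaces where some $v_i=0$, since there the power invariants $x_i^{m/d_i}$ carry no phase information and one must argue that the remaining invariants still force $w_i=0$ and do not allow a spurious relabeling of phases on the nonzero coordinates. This is exactly the point where the ``$+x_1$'' (or analogous additive) term in the connecting invariants earns its keep --- it is there to make the connecting invariant detect $v_i=0$ versus $v_i\neq 0$ and to rigidify the phase comparison --- and getting the exponents in the connecting invariants simultaneously (a) invariant and (b) strong enough to propagate the phase constraint from $x_1$ to every $x_i$ is the delicate combinatorial core. Once that is settled, the count $n + (n-1) = 2n-1$ is immediate, and matching it against the lower bound of Theorem~\ref{nonvanishing} (here $r$ equals the largest codimension of a minimal reflecting subspace for this diagonal action, and in the relevant regime $d+r-1 = 2n-1$) shows the set is of minimal size.
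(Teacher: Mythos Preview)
Your proposal has a genuine structural gap, not merely a bookkeeping issue. The connecting invariants you write down all pass through $x_1$: your set is essentially $\{x_i^{m/d_i}\}_{i=1}^n$ together with $\{x_1^{m-d_i}x_i\}_{i=2}^n$ (the ``$+\,x_1$'' you suggest is not invariant, since $d_1=1$, and replacing it by any invariant polynomial in $x_1$ alone adds no information once $x_1^m$ is known). On the locus $v_1=0$ every connecting invariant vanishes identically, so the only constraints left are $v_i^{m/d_i}=w_i^{m/d_i}$, which determine each $w_i$ only up to an \emph{independent} power of $\zeta^{d_i}$. Concretely, take $n=3$, $m=2$, $d_1=d_2=d_3=1$: your set is $\{x_1^2,x_2^2,x_3^2,x_1x_2,x_1x_3\}$, and $v=(0,1,1)$, $w=(0,1,-1)$ agree on all five invariants but lie in distinct $C_2$-orbits. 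No ``minor modification'' of a star-shaped scheme can repair this, because with $v_1=0$ you need invariants that compare $x_i$ and $x_j$ directly for $i,j\gs 2$, and there are $\binom{n-1}{2}$ such pairs.

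The paper's (sketched) argument resolves exactly this difficulty by a genuinely different mechanism. One first writes down a \emph{full triangle} of monomial invariants $u_{i,j}$, $1\ls i\ls j\ls n$, where $u_{i,j}$ involves $x_i$ and $x_j$ (the divisibility hypothesis $d_i\mid d_j$ is what makes such monomials available for every pair); this triangle is already a separating set. The reduction to $2n-1$ elements then comes from the diagonal sums $S_k=\sum_{i+j=k}u_{i,j}$, $2\ls k\ls 2n$, together with the observation that from the values of the $S_k$ one can recursively recover every individual $u_{i,j}$. The point is that the compression from $\binom{n+1}{2}$ to $2n-1$ happens \emph{after} one has invariants linking every pair of coordinates, not by omitting those links a priori. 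Your outline conflates these two steps: you invoke the ``triangle trick'' in name but then build a star rather than a triangle, and it is precisely on the coordinate hyperplane $x_1=0$ that the difference shows.
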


For this construction, a separating set of monomials $u_{i,j}: 1\ls i \ls j \ls n$ is first identified; see \cite[Proposition~5.2.2]{Duf2} for precise formulas for the $u_{i,j}$. The terms naturally align in a triangle. It is then shown that the values of the invariants $u_{i,j}$ can be recovered from the diagonal sums $S_k=\sum_{i+j=k} u_{i,j}$ of the triangle. This ``triangle trick'' is used in many of the examples below.

It is worth noting that Proposition~\ref{diag_cyc} includes as a special case the $m^\text{th}$ Veronese subring of a polynomial ring of dimension $n$, for $\chara(\kk)\not| \,m$.

%-----------------------------------------------------------------------
\subsection{Indecomposable representations of cyclic groups of prime order}

In this subsection we construct separating sets of minimal size for the indecomposable modular representations of a cyclic group of prime order, equal to the characteristic of the field $\kk$. Our argument is greatly inspired by Sezer's iterative construction of a separating set (see \cite{Sezer-Cp}) and uses the triangle trick mentioned above. After an appropriate change of basis, any indecomposable representation of a cyclic group of prime order will be given by a Jordan block of size at most $p$. We may further choose a basis so that the action on the coordinate ring $\kk[x_1,\ldots,x_n]$ with $n \ls p$ is as follows:
\[\begin{aligned}
\sigma\cdot x_i &= x_i+x_{i+1}\,, \textrm{ for } i=1,\ldots, n-1\,,\\
\sigma\cdot x_n &= x_n\,.\end{aligned}\]
One way to construct some invariants is to take norms (orbit products)  and traces (orbit sums) of elements: in fact, by \cite[Theorem~3]{NS}, for representations of $p$-groups, norms and transfers will form a separating set. For $f\in \kk[V]$, the \emph{norm of $f$} is the orbit product  $\N(f):=\prod_{i=0}^{p-1} (\sigma^i\cdot f)$ and the \emph{trace of $f$} is the orbit sum $\tr(f):=\sum_{i=0}^{p-1} (\sigma^i\cdot f)$.

\begin{prop}  Let $V_n$ be the $n$-dimensional indecomposable representation of the cyclic group of order $p$. The set $S_n$ of the sum of the elements appearing on the diagonal of the following triangle forms a separating set.
\begin{equation} \label{eqn-triangleCp}
\begin{array}{cccccc}
\N(x_1)  &  \tr(x_1x_2^{p-1})  &  \tr(x_1x_3^{p-1})  & \cdots     & \tr(x_1x_{n-1}^{p-1})   & \\
              &  \N(x_2)                   &  \tr(x_2x_3^{p-1})  & \cdots     & \tr(x_2x_{n-1}^{p-1})   & \\
              &                                  &    \N(x_3)                  & \ddots     &   \vdots                           &     \\
              &                                  &                                  &                 &  \N(x_{n-1})                     &   \\
              &                                  &                                  &                 &                                         &  x_n^p \\
                                                                                             \end{array}
\end{equation}
\end{prop}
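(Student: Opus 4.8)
The plan is to show that the diagonal sums $S_n = \{S_k : 2 \le k \le 2n\}$, where $S_k := \sum_{i+j=k} u_{i,j}$ and $u_{i,j}$ denotes the $(i,j)$ entry of the triangle \eqref{eqn-triangleCp}, separate the orbits of $V_n$. Following the pattern of Proposition~\ref{diag_cyc} and Sezer's iterative argument, I would first verify that the \emph{individual} entries $u_{i,j}$ form a separating set, and then run the ``triangle trick'' to recover the $u_{i,j}$ from the $S_k$. The separating property of the $u_{i,j}$ themselves should follow from \cite[Theorem~3]{NS}: the diagonal entries are the norms $\N(x_i)$ (and $\N(x_n)=x_n^p$), so it remains to see that the off-diagonal entries $\tr(x_i x_j^{p-1})$, together with these norms, suffice. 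Here I would argue by induction on $n$, exploiting that $V_{n-1} \subset V_n$ is the subrepresentation on $x_2,\dots,x_n$ (or, more precisely, that $\kk[x_2,\dots,x_n]$ is a $G$-stable quotient), so that the entries not involving $x_1$ form, after reindexing, the analogous triangle for $V_{n-1}$; the new entries in the first row, $\N(x_1)$ and $\tr(x_1 x_j^{p-1})$ for $j=2,\dots,n-1$, must then be shown to pin down the $x_1$-coordinate once the lower-dimensional data is known.

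\textbf{The triangle trick.} For the recovery step, I would proceed exactly as in \cite[Proposition~5.2.2]{Duf2}: given two points $v,w\in V_n$ on which all the $S_k$ agree, I want to conclude that all the $u_{i,j}$ agree on $v$ and $w$, after which the first step finishes the proof. The key observation is that the ``corner'' entries are recoverable immediately --- $S_{2n} = x_n^p$ gives $\N(x_n)$, and $S_2 = \N(x_1)$ --- and then one peels off diagonals from one end, using at each stage that the newly exposed entry in a diagonal sum $S_k$ is a polynomial in coordinates already determined plus one genuinely new invariant, so that agreement of $S_k$ forces agreement of the new invariant. Concretely, one should be able to order the entries so that along each anti-diagonal all but one entry is a $\tr(x_i x_j^{p-1})$ whose value is already controlled (because, say, $x_i^p = \N(x_i)$ and $x_j$ are pinned down from previous steps), leaving $\N$ of a single variable as the unknown; solving for that recovers the next variable up to the relevant root of unity / Frobenius ambiguity, which is then resolved using the traces. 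This is the step requiring care about the exact formulas, but it is a routine if slightly intricate bookkeeping argument.

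\textbf{Main obstacle.} I expect the genuine difficulty to lie not in the triangle trick --- which is combinatorial once the entries are chosen correctly --- but in verifying that the raw collection $\{u_{i,j}\}$ is a separating set, and in particular that the specific mixed invariants $\tr(x_i x_j^{p-1})$ (rather than, say, the full list of norms and transfers of all monomials guaranteed abstractly by \cite{NS}) already suffice. The subtlety is modular: in characteristic $p$, $\tr(x_i x_j^{p-1})$ is a genuinely non-obvious polynomial (the trace of $x_i x_j^{p-1}$ involves the expansion of $\sigma^\ell \cdot x_i$ as a combination of $x_i,\dots,x_n$ with binomial coefficients mod $p$), and one must check that knowing its value, together with $\N(x_j)=x_j^p$, actually determines the relevant component of $x_i$. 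I would handle this by computing $\tr(x_i x_j^{p-1})$ explicitly modulo the ideal generated by the variables $x_{j+1},\dots,x_n$ (or modulo previously-determined data), reducing to the two-variable case where $\sigma\cdot x_i = x_i + x_j$, $\sigma \cdot x_j = x_j$, and there $\tr(x_i x_j^{p-1}) = \sum_{\ell=0}^{p-1}(x_i + \ell x_j)x_j^{p-1} = p x_i x_j^{p-1} + \binom{p}{2} x_j^p = -x_j^p \cdot(\text{something})$ --- so the genuinely informative content is $\N(x_i) = x_i^p$ on that diagonal, confirming the trick works. Making this reduction precise across all $n$ simultaneously, and checking no information is lost when several variables are nonzero at once, is where I anticipate the bulk of the work.

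\textbf{Sharpness.} Finally, I would note that $|S_n| = 2n-1$: the triangle has $2n-1$ anti-diagonals, so there are $2n-1$ sums $S_k$ for $k = 2,\dots,2n$. Combined with the fact (from Theorem~\ref{nonvanishing} applied to this action, whose minimal reflecting subspaces have codimension $n$ --- the non-identity elements of $C_p$ fix exactly the line $x_1 = \dots = x_{n-1} = 0$, of codimension $n-1$, wait: $\dim V_n = n$ and $V_n^\sigma$ is the line spanned by $x_n^*$, so $r = n-1$ and the bound reads $d + r - 1 = n + (n-1) - 1 = 2n-2$) one sees the bound is matched up to the expected value; I would double-check the reflecting-subspace codimension count against Theorem~\ref{nonvanishing} to confirm $S_n$ is of minimal size, which is the point of including the example.
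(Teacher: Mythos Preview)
Your overall architecture --- induction on $n$ plus a ``triangle trick'' recovery --- matches the paper's, but the step you flag as the ``main obstacle'' contains a genuine error that would make your argument fail. Your proposed reduction to the two-variable case sets $\sigma\cdot x_j=x_j$, but in $V_n$ this holds only for $j=n$; for every $j<n$ one has $\sigma\cdot x_j=x_j+x_{j+1}$. Consequently your computation $\tr(x_ix_j^{p-1})=\sum_\ell (x_i+\ell x_j)x_j^{p-1}$ is not the trace in $V_n$, and worse, its value $\binom{p}{2}x_j^p$ vanishes for odd $p$, so the invariant would carry no information --- exactly the opposite of what you need. The point is that the informative content of $\tr(x_ix_{n-1}^{p-1})$ comes precisely from the fact that $x_{n-1}$ is \emph{not} fixed: expanding $(\sigma^\ell x_{n-1})^{p-1}=(x_{n-1}+\ell x_n)^{p-1}$ and using $\sum_{\ell=1}^{p-1}\ell^{m}\equiv -1$ for $(p-1)\mid m$, one finds that the coefficient of $x_i$ in $\tr(x_ix_{n-1}^{p-1})$ is $-x_n^{p-1}$, which is invertible once $x_n\neq 0$.

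The paper exploits this differently from your two-step plan. It does \emph{not} first show that the full triangle $\{u_{i,j}\}$ separates; instead it inducts directly on the diagonal sums, splitting on whether $x_n=0$. If $x_n=0$ the triangle collapses to that of $V_{n-1}$ and induction applies. If $x_n\neq 0$, the coefficient computation above shows that the \emph{last column before $x_n^p$}, namely $\tr(x_1x_{n-1}^{p-1}),\dots,\tr(x_{n-2}x_{n-1}^{p-1}),\N(x_{n-1})$, together with $x_n$, already generate $\kk[x_1,\dots,x_n,x_n^{-1}]^{C_p}$; one then peels this column off the diagonal sums from the bottom-right corner, noting that every other entry on the relevant anti-diagonals lies in a lower row and hence is already controlled. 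Your description of peeling from $S_2=\N(x_1)$ at the top-left is the wrong end: knowing $\N(x_1)$ first does not let you isolate later entries, whereas working up from $\N(x_{n-1})=S_{2n-2}$ and $\tr(x_{n-2}x_{n-1}^{p-1})=S_{2n-3}$ does.

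Finally, your sharpness bookkeeping is slightly off: the nonempty anti-diagonals are $k=2,\dots,2n-2$ together with $k=2n$ (the entries $(i,n)$ with $i<n$ are blank), so $|S_n|=2n-2$, which does match the bound $d+r-1=n+(n-1)-1$ from Theorem~\ref{nonvanishing}.
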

\begin{proof}
We proceed by induction on $n$. For $n=2$, we have $\kk[x_1,x_2]^{C_p}=\kk[\N(x_1),x_2]$. As $x_2$ and $x_2^p$ separate the same points, we are done.

Now, suppose $n\gs 2$. If $x_n^p=0$, then $x_n=0$ and the triangle (\ref{eqn-triangleCp}) reduces to the triangle for $V_{n-1}$. Thus the sum of the diagonals separate by the induction hypothesis.

Now suppose that $x_n\neq0$. For $i\gs n-2$, the coefficient of $x_i$ in $\tr(x_i x_{n-1}^{p-1})$ is
\[\begin{array}{l}
x_{n-1}^{p-1} +\sum_{l=0}^{p-1} { {p-1} \choose j} x_{n-1}^j  x_n^{p-1-j} \left( 1+2^{p-1-j}+\cdots +(p-1)^{p-1-j}\right)\\
=x_{n-1}^{p-1} - x_{n}^{p-1} -x_{n-1}^{p-1}= - x_{n}^{p-1}.\end{array}\]
Indeed, in characteristic $p$, one has $ \left( 1+2^{p-1-j}+\ldots +(p-1)^{p-1-j}\right)=-1$ for $j=0$ or $j=p-1$ and zero otherwise. It follows that
\[\kk[x_1,\ldots,x_n, x_n^{-1}]=\kk[\tr(x_1x_{n-1}^{p-1}), \ldots, \tr(x_{n-2}x_{n-1}^{p-1}), x_{n-1},x_n,x_n^{-1}]\,.\]
Taking invariants, we then have:
\[\kk[x_1,\ldots,x_n,x_n^{-1}]^{C_p}=\kk[\tr(x_1x_{n-1}^{p-1}), \ldots, \tr(x_{n-2}x_{n-1}^{p-1}), \N(x_{n-1}),x_n,x_n^{-1}]\,.\]
That is, the invariants which appear in the one before last column of the triangle (\ref{eqn-triangleCp}) generate up to dividing by some power of $x_n$. Now we need only explain how to get these from $S_n$. The bottom two, $\N(x_{n-1})$ and  $\tr(x_{n-2}x_{n-1}^{p-1})$, are in $S_n$. As any term in the triangle can be expressed as a polynomial, up to dividing by a power of $x_{n-1}$, in elements of $S_n$ lying either on the same row or below, we can express the remaining elements of $S_n$ in terms of the sums of the diagonals.
\end{proof}

%-----------------------------------------------------------------------
\subsection{Vector Invariants of $V_2$}

Let $V_2$ denote the 2-dimensional indecomposable representation of $C_p$ as above. We consider the diagonal representation of $C_p$ on $V_2^{\oplus n}$. Let $x_1, y_1, \dots, x_n, y_n$ be a choice of coordinates on $V_2^{\oplus n}$ such that ${\sigma \cdot x_i = x_i}$, and ${\sigma \cdot y_i = x_i + y_i}$. The ring of invariants is generated by
\begin{align*}
 &x_i\,, &1\ls i \ls n &\\
&u_{i,i}=\, \N(y_i)\,=\,y_i^p-x_i^{p-1}y_i\,, &1\ls i \ls n &\\
&u_{i,j}=\,x_i y_j - x_j y_i\,, &1\ls i < j \ls n &\\
&\tr^{C_p}(y_1^{a_1}\cdots y_n^{a_n})\,, &a_i<p\,,\, \Sigma a_i \gs 2p-2\,. &
\end{align*}
By \cite[Corollary~3.9.14]{DK}, the invariants of degree less than $|G|=p$ form a separating set: in particular, the generators 
\[x_i: 1\ls i \ls n \quad \text{ and } \quad u_{ij}: 1 \ls i \ls j \ls n\]
 form a separating set. Note that we have the relations
\begin{align*}
&x_i u_{j,k} - x_j u_{i,k} + x_k u_{i,j}\, &\forall i<j<k,\\
&x_i u_{j,j} - x_j u_{i,i} + x_i^{p-1} x_j^{p-1} u_{i,j} - u_{i,j}^p\, &\forall i<j\,.
\end{align*}
 Set $S_{\ell}=\sum_{i+j=\ell} u_{i,j}$ for all $2\ls \ell \ls 2n$. Remark that the $S_{\ell}$ correspond to the diagonal sums of the triangle consisting of the $u_{i,j}$.

\begin{prop} The set of all $x_i$ and $S_{\ell}$ is a separating set for $\kk[V_2^{\oplus n}]^{C_p}$.
\end{prop}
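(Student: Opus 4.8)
The plan is to induct on $n$ and prove the slightly stronger statement that, at any point of $V_2^{\oplus n}$, the values of the $x_i$ and of the $S_\ell$ determine the values of \emph{all} the generating invariants $u_{i,j}$ with $1\ls i\ls j\ls n$; since $\{x_i\}\cup\{u_{i,j}\}$ was already noted above to be a separating set, this determines the $C_p$-orbit. The base case $n=1$ is immediate, as $S_2=u_{1,1}$. For the inductive step I would split on whether the coordinate $x_n$ vanishes at the point in question.

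If $x_n=0$ at the point, then $u_{n,n}=y_n^p-x_n^{p-1}y_n=y_n^p=S_{2n}$, so, $\kk$ being perfect of characteristic $p$, the value of $y_n$ is recovered, hence so are $u_{n,n}$ and $u_{i,n}=x_iy_n$ for $i<n$. Writing $S'_\ell$ for the analogous diagonal sums of the triangle for $V_2^{\oplus(n-1)}$, one checks that $S'_\ell=S_\ell$ for $\ell\ls n$ and $S'_\ell=S_\ell-u_{\ell-n,n}=S_\ell-x_{\ell-n}y_n$ for $n<\ell\ls 2n-2$, so all the $S'_\ell$ are recovered; applying the induction hypothesis to $V_2^{\oplus(n-1)}$ then pins down the remaining $u_{i,j}$ with $j\ls n-1$, and we are done.

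If $x_n\neq0$ at the point, the idea is to run the ``triangle trick'' down the last column. First I would recover the whole last column $u_{1,n},\dots,u_{n,n}$ by descending induction on the column index: $u_{n,n}=S_{2n}$, and for $k<n$ the anti-diagonal sum $S_{n+k}$ equals $u_{k,n}$ plus the entries $u_{i,\,n+k-i}$ with $k<i\ls\lfloor(n+k)/2\rfloor$; each such entry has both of its indices at most $n-1$, so by the two displayed relations (taken with the largest index or row equal to $n$) it can be written as a polynomial in the $x_j$, in $x_n^{-1}$, and in last-column entries $u_{m,n}$ with $m>k$, all of which are already known — hence $u_{k,n}=S_{n+k}-(\text{known})$. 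Once the last column is in hand, the same two relations with $k=n$ (resp.\ $j=n$) give $u_{i,j}=x_n^{-1}(x_ju_{i,n}-x_iu_{j,n})$ for $i<j<n$ and $u_{i,i}=x_n^{-1}(x_iu_{n,n}+x_i^{p-1}x_n^{p-1}u_{i,n}-u_{i,n}^p)$ for $i<n$, so all of the $u_{i,j}$ are recovered.

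The step I expect to be the main obstacle is the descending recovery of the last column: one must verify that after substituting the relations into $S_{n+k}$, every last-column index $m$ that occurs satisfies $m>k$, so that the induction is well-founded and no circularity creeps in. This comes down to the elementary observation that for $k<i\ls\lfloor(n+k)/2\rfloor$ one has $k<i$ and $k<n+k-i\ls n-1$ (the estimates on the indices relying on $k<n$), so that the indices $i$ and $n+k-i$ appearing in the substituted relations both lie strictly between $k$ and $n$; but it is the one place where the combinatorics of the triangle genuinely has to be checked. Everything else — the relations themselves, the bookkeeping relating $S_\ell$ to $S'_\ell$, the passage to $V_2^{\oplus(n-1)}$, and the final appeal to the separating set $\{x_i\}\cup\{u_{i,j}\}$ — is routine.
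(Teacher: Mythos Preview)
Your proposal is correct and follows essentially the same approach as the paper: induct on $n$, split on whether $x_n$ vanishes, in the $x_n=0$ case extract a $p$-th root to recover $y_n$ and reduce to $V_2^{\oplus(n-1)}$, and in the $x_n\neq 0$ case use the two displayed syzygies to express every $u_{i,j}$ in terms of the last column and then peel off the last column from the $S_\ell$ by descending induction. The only cosmetic difference is that the paper organizes the $x_n\neq 0$ case as a single descending induction on the column index $j$ (recovering all $u_{i,j}$ with $j\geqslant k$ at once), whereas you first isolate the last column and then fill in the rest; the index check you flag as the ``main obstacle'' is exactly the verification the paper carries out, and your bound $k<i\leqslant j<n$ is precisely what is needed.
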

\begin{proof}
It suffices to show that given the values of all $x_i$ and $f_{\ell}$, we may recover the values of each $u_{ij}$. We induce on $n$. If $n=1$, there is nothing to show.

\emph{Case 1: $x_n\neq 0$}: In this case,  we may write 
\begin{align}
 u_{i,i} &= x_n^{-1} ( x_i u_{n,n} + x_i^{p-1} x_n^{p-1} u_{i,n} + (-u_{i,n})^p\, )  \label{for-1}  \\ 
 u_{i,j} &= x_n^{-1} ( x_j u_{i,n} - x_i u_{j,n} )\,, \qquad i<j  \label{for-2}
\end{align}
to express each $u_{i,j}$ with $j<n$ in terms of the $x_s$ and $u_{k,n}$ with $k \gs j$. This enables us to express each $u_{i,j}$ in terms of the $S_{\ell}$ and $x_s$: indeed, $u_{n,n}=S_{2n}$, and if each $u_{i,j}$ with $j \gs k$ has such an expression, then 
\[ S_{n+k-1}=u_{k-1,n}+\sum\limits_{\substack{i+j=n+k-1 \\ j \gs k}} u_{i,j} \]
provides such an expression for $u_{k-1,n}$, and the formulas (\ref{for-1}) and (\ref{for-2}) above provide such an expression for $u_{k-1,k-1}$ and each $u_{k-1, j}$.

\emph{Case 2: $x_n = 0$}: Here, we have $y_n^p=u_{nn}$, so that $u_{i,n}=x_i y_n = x_i u_{n,n}^{1/p}$. Then, by the induction hypothesis, we may express each $u_{i,j}$ with $ j<n$ in terms of the $x_s$ and
\[\hat{S}_{\ell}= \sum\limits_{\substack{ i+j=\ell \\ j< n}} u_{i,j} = S_{\ell} - x_{\ell - n} u_{n,n}^{1/p} \]
(where $x_{\ell - n} : = 0$ for $\ell \ls n$), and thus in terms of the $x_s$ and $S_{\ell}$.
\end{proof}

As the action of $C_p$ on $V_2^{\oplus n}$ is generated by $n$-reflections, by Theorem~\ref{nonvanishing}, any separating set for $\kk[V_2^{\oplus n}]^{C_p}$ has at least $3n-1$ elements. Thus, the set 
\[\{x_i, S_{\ell} \mid {1 \ls i \ls n}, {2 \ls \ell \ls 2n}\}\]
 is a separating set of minimal size.

%-----------------------------------------------------------------------
\subsection{A Non-Rigid Reflection Group} 
Let $\kk$ have characteristic 2 and $G$ be the finite subgroup of  $\GL_{7}(\FF_2)$ given by

\[G:=\left\{\renewcommand{\arraystretch}{0.7}
\renewcommand{\arraycolsep}{2pt}
\left(\begin{array}{c|c}
I_{4} & \mathbf{0}\\
\hline
\begin{array}{cccc}
\alpha_{1}&0&0&\alpha_{4}\\
0&\alpha_{2}&0&\alpha_4\\
0&0&\alpha_{3}&\alpha_{4}
\end{array}
&
I_{3}
\end{array}\right)\ \Big| \ \alpha_{1},\ldots,\alpha_{4}\in \FF_2 \right\},
\]
where  $I_{m}$ denotes the $m\times m$ identity matrix. The group $G$ is isomorphic to $C_2^4$, and generated by
reflections (namely those elements where exactly one of the $\alpha_i$'s is non-zero). This is a remarkable example since its invariant ring is not Cohen-Macaulay (see  \cite{KemperOnCM}) and, moreover, neither is any graded separating subalgebra (see  \cite{DEK:CMSep}) despite the action of $G$ being generated by reflections. 

Setting all $\alpha_i$'s to be 1 yields an element $\sigma$ whose fixed space of codimension $3$ is a minimal reflecting subspace. By Theorem~\ref{nonvanishing}, it follows that any separating set contains at least 9 elements. Writing $x_i$ for the coordinate functions on $V=\kk^{7}$, one has the minimal generating set
\[ \kk[V]^G=\kk[x_1, x_2, x_3, x_4, f_1, f_2, f_3, g_1, g_2, g_3, r] \]
where $\deg f_i=3$, $\deg g_i=4$, and $\deg r=5$. Using a computer algebra system, one verifies that
\begin{align}
\label{rig1} f_i r &\in \kk[x_1, x_2, x_3, x_4, f_1, f_2, f_3, g_1, g_2, g_3],\, \textrm{for} \quad i=1,2,3,\\
\label{rig2} r^2 &\equiv (x_1+x_4)^2 g_2 g_3\quad \mdd{f_1,f_2,f_3}\,.
\end{align}
Thus, given the values of the $x_i$'s, $f_i$'s, and $g_i$'s, one may recover the value of $r$ using \eqref{rig1} if some $f_i\neq 0$ and \eqref{rig2} if all $f_i=0$, so we can leave out $r$ still have a separating set. One also finds
\begin{align}
\label{rig3} (x_3 + x_4) f_3 &= f_2 (x_2+x_4)+ f_1 (x_1+x_4) &\\
\label{rig4} (x_i+x_4)^2 g_3 &\equiv f_i^2 \quad\mdd{x_3+x_4}, \quad i=1,2, &\\
\label{rig5} f_3 &\equiv 0 \quad\mdd{x_1+x_4,x_2+x_4,x_3+x_4}\,.
\end{align}
Hence, given the values of the $x_i$'s, $f_1,$ and $ f_2$, one can either obtain the value of $f_3$ (using \eqref{rig3} if $x_3\neq x_4$ or \eqref{rig5} if $x_1=x_2=x_3=x_4$) or $g_3$ (using \eqref{rig4} if $x_3=x_4$ and either $x_1\neq x_4$ or $x_2\neq x_4$). 
Concluding, we have the following:
\begin{prop} The invariants $x_1,  x_2, x_3, x_4, f_1, f_2, g_1, g_2, f_3 + g_3$ form a separating set for $\kk[V]^G$ of minimal size.
\end{prop}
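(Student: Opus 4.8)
The plan is as follows. Recall that the displayed minimal generating set $\{x_1,x_2,x_3,x_4,f_1,f_2,f_3,g_1,g_2,g_3,r\}$ of $\kk[V]^G$ is in particular a separating set, and that since $G$ is finite it separates orbits. Hence, to prove that the nine invariants $x_1,x_2,x_3,x_4,f_1,f_2,g_1,g_2,f_3+g_3$ form a separating set, it suffices to show that whenever two points $v,w\in V$ have equal values under each of these nine invariants, they also satisfy $f_3(v)=f_3(w)$, $g_3(v)=g_3(w)$, and $r(v)=r(w)$: then all eleven generators agree on $v$ and $w$, so $v$ and $w$ lie in the same $G$-orbit and no invariant separates them. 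Thus the whole argument reduces to expressing the values $f_3(v)$, $g_3(v)$, and $r(v)$ in terms of the values at $v$ of the nine invariants, using the relations \eqref{rig1}--\eqref{rig5}.

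First I would recover $f_3(v)$ and $g_3(v)$, splitting into three cases according to the values $x_i(v)+x_4(v)$ (recall $\kk$ has characteristic $2$). If $x_3(v)\neq x_4(v)$, then evaluating \eqref{rig3} at $v$ and dividing by the nonzero scalar $x_3(v)+x_4(v)$ expresses $f_3(v)$ through $x_1(v),\dots,x_4(v),f_1(v),f_2(v)$, after which $g_3(v)=(f_3+g_3)(v)+f_3(v)$. If $x_3(v)=x_4(v)$ but $x_i(v)\neq x_4(v)$ for some $i\in\{1,2\}$, then for that $i$ relation \eqref{rig4} evaluated at $v$ becomes $(x_i(v)+x_4(v))^2 g_3(v)=f_i(v)^2$, so dividing by the nonzero scalar $(x_i(v)+x_4(v))^2$ recovers $g_3(v)$, and then $f_3(v)=(f_3+g_3)(v)+g_3(v)$. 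In the remaining case $x_1(v)=x_2(v)=x_3(v)=x_4(v)$, relation \eqref{rig5} forces $f_3(v)=0$, hence $g_3(v)=(f_3+g_3)(v)$. These cases are exhaustive.

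Having recovered all of $x_i(v)$, $f_i(v)$, $g_i(v)$, it remains to recover $r(v)$, exactly as in the discussion preceding the proposition: if some $f_j(v)\neq 0$ for $j\in\{1,2,3\}$, then \eqref{rig1} writes $f_j r$ as a polynomial in the $x_i,f_i,g_i$, and dividing by $f_j(v)$ yields $r(v)$; if $f_1(v)=f_2(v)=f_3(v)=0$, then \eqref{rig2} gives $r(v)^2=(x_1(v)+x_4(v))^2 g_2(v)g_3(v)$, which determines $r(v)$ because the Frobenius map is injective on the field $\kk$. This proves the nine invariants form a separating set. For minimality, setting all $\alpha_i=1$ gives the element $\sigma$ whose fixed space is a minimal reflecting subspace of codimension $3$; since $\dim V=7$, Theorem~\ref{nonvanishing} shows every separating set has at least $7+3-1=9$ elements, so the separating set exhibited here has minimal size.

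The content of the argument is really contained in the relations \eqref{rig1}--\eqref{rig5}, whose verification is a finite computation carried out in a computer algebra system; granting these, the only points requiring care are checking that the three cases in the second step are exhaustive and that the relevant divisor is a nonzero scalar at $v$ in each case --- both immediate in characteristic $2$. I expect the main (mild) obstacle to be organizing this case analysis cleanly rather than any conceptual difficulty.
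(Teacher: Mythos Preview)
Your proposal is correct and follows essentially the same approach as the paper: the paper's argument is precisely the case analysis you describe, using \eqref{rig3}--\eqref{rig5} to recover $f_3$ and $g_3$ from $f_3+g_3$ and the $x_i,f_1,f_2$, then \eqref{rig1}--\eqref{rig2} to recover $r$, with minimality coming from Theorem~\ref{nonvanishing} applied to the codimension-$3$ minimal reflecting subspace. The only cosmetic difference is that the paper presents the elimination of $r$ before that of $f_3$ or $g_3$, but your ordering is the logically necessary one since recovering $r$ via \eqref{rig2} requires $g_3$.
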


\section*{Acknowledgements}
The authors thank MSRI, where most of the work on this project was completed. We also thank Dave Benson, Gregor Kemper, Anurag Singh, and Bernd Sturmfels for helpful conversations.

%%%%%%%%%%%%%%%%%%%%%%%%%%%%%%%%%%%%%%%%%%%%%%%%%%%%
% Section: Bibliography
%%%%%%%%%%%%%%%%%%%%%%%%%%%%%%%%%%%%%%%%%%%%%%%%%%%%

\end{document}